\documentclass[a4paper,11pt]{article}
\usepackage[margin=2.5cm]{geometry}
\usepackage{graphicx}
\usepackage{amsmath}
\usepackage{amsfonts}
\usepackage{amssymb}
\usepackage{amsthm}
\usepackage{array}
\usepackage{float}
\usepackage{subcaption}
\usepackage{overpic}
\usepackage{pgfplots}
\pgfplotsset{compat=newest}
\usepackage{tikz}
\usepackage{bm}
\usepackage{todonotes}

\newtheorem{theorem}{Theorem}
\newtheorem{corollary}{Corollary}
\newtheorem{lemma}{Lemma}

\newtheorem{remark}{Remark}

\begin{document}

\title{On the optimization of the fixed-stress splitting for \\ Biot's equations}

\author{Erlend Storvik\thanks{Department of Mathematics, University of Bergen, Bergen, Norway; $\{$\texttt{erlend.storvik@uib.no, jakub.both@uib.no, jan.nordbotten@uib.no, florin.radu@uib.no}$\}$} 
\and 
Jakub W.\ Both\footnotemark[1]
\and
Kundan Kumar\thanks{Department of Mathematics and Computer Science, Karlstad University, Karlstad, Sweden; $\{$\texttt{kundan.kumar@kau.se}$\}$}
\and
Jan M.\ Nordbotten\footnotemark[1]\hspace{0.4em}$^,$\thanks{Department of Civil and Environmental Engineering, Princeton University, Princeton NJ, USA}
\and
Florin A.\ Radu\footnotemark[1]}



\maketitle

\begin{abstract}In this work we are interested in effectively solving the quasi-static, linear Biot model for poromechanics. We consider the fixed-stress splitting scheme, which is a popular method for iteratively solving Biot's equations. It is  well-known that the convergence of the method is strongly dependent on the applied stabilization/tuning parameter. In this work, we propose a new approach to optimize this parameter. We show theoretically that it depends also on the fluid flow properties and not only on the mechanics properties and the coupling coefficient. The type of analysis presented in this paper is not restricted to a particular spatial discretization. We only require it to be inf-sup stable. The convergence proof applies also to low-compressible or incompressible fluids and low-permeable porous media. Illustrative numerical examples, including random initial data, random boundary conditions or random source terms and a well-known benchmark problem, i.e. Mandel's problem are performed. The results are in good agreement with the theoretical findings. Furthermore, we show numerically that there is a connection between the inf-sup stability of discretizations and the performance of the fixed-stress splitting scheme.\end{abstract}

\section{Introduction}\label{sec:introduction}

There is currently a strong interest on numerical simulation of poromechanics, i.e.\ fully coupled porous media flow and mechanics. This is due to the high number of societal relevant applications of poromechanics, like geothermal energy extraction, life sciences or $\text{CO}_2$ storage to name a few. The most common used mathematical model for poromechanics is the quasi-static, linear Biot model, see e.g.\  \cite{coussy}:

Find $({\bf u}, p) $ such that
\begin{eqnarray}
-\nabla \cdot (2 \mu \varepsilon( \bm u) + \lambda \nabla\cdot {\bm u I})+\alpha \nabla p &=& {\bm f}, \label{eq:mechanics} \\ [1ex]
\frac{\partial}{\partial t}\left(\frac{p}{M} + \alpha \nabla \cdot {\bm u}\right) - \nabla \cdot (\kappa(\nabla p - {\bm g}\rho))& = &S_f, \label{eq:flow}
\end{eqnarray}
where ${\bm u}$ is the displacement, $\varepsilon({\bm u}) = \frac{1}{2}(\nabla{\bm u + \nabla \bm u}^\top)$ is the (linear) strain tensor, $\mu, \lambda$ are the Lam\'{e} parameters, $\alpha$ is the Biot-Willis constant, $p, \rho$ are fluid's pressure and density, respectively, $M$ is a compressibility constant, ${\bm g}$ the gravitational vector and $\kappa$ is the permeability. The source terms ${\bm f}$ and $S_f$ represent the density of applied body forces and a forced fluid extraction or injection process, respectively. 

There are plenty of works concerning the discretization of Biot's equations  \eqref{eq:mechanics}--\eqref{eq:flow}. The most common temporal discretization is based on backward Euler, see e.g.\ \cite{mikelicwang,jakubAML}. Many combinations of spatial discretizations have been proposed and analyzed. We mention cell-centered finite volumes \cite{jan}, continuous Galerkin for the mechanics and mixed finite elements for the flow \cite{phillips,yi2016,jakubAML,berger}, mixed finite elements for flow and mechanics \cite{yi2016}, non-conforming finite elements \cite{xu}, the MINI element \cite{carmenMINI}, continuous or discontinuous Galerkin \cite{riviere,riviereDG} or multiscale methods \cite{dana2,tchelepi,castelletto}. Continuous and discontinuous higher-order Galerkin space time elements were proposed in \cite{bause}. Adaptive computations were considered e.g.\ in \cite{ern}. A Monte-Carlo approach was proposed in \cite{fred}. For a discussion on the stability of the different spatial discretizations we refer to the recent papers \cite{haga2012,carmen}.

Independently of the chosen discretization there are two alternatives for solving Biot's equations: monolithically or by using an iterative splitting algorithm. The former has the advantage of being unconditionally stable, whereas a splitting method is much easier to implement, typically building on already available, separate numerical codes for porous media flow and for mechanics. However, a naive splitting of Biot's equations will lead to an unstable scheme \cite{Kim}. To overcome this, one adds a stabilization term in either the mechanics equation (the so-called {\it undrained split scheme}) or in the flow equation (the {\it fixed-stress splitting scheme}) \cite{settari1998}. The splitting methods have very good convergence properties, making them a valuable alternative to monolithic solvers for simulation of the linear Biot model, see e.g.\ \cite{settari1998,Kim,andro,jakubAML}. In the present work we will discuss the fixed-stress splitting scheme; a similar analysis can be performed for the undrained split scheme.

After applying backward Euler in time to \eqref{eq:mechanics}--\eqref{eq:flow} and discretizing in space (using finite elements or finite volumes), one has to solve a fully coupled, discrete system at each time step. The fixed-stress method is an iterative splitting scheme to solve this fully coupled system. If we denote by $i$ the iteration index, one looks to find a pair $({\bf u^i}, p^i)$ to converge to the solution $({\bf u}, p)$, when $i \rightarrow +\infty$. Algorithmically, one solves first the flow equation \eqref{eq:flow} using the displacement from the last iteration, then solves the mechanics equation \eqref{eq:mechanics} with the updated pressure and iterates until convergence is achieved. To ensure the convergence \cite{Kim,andro,jakubAML}, one needs to add a stabilization term $L(p^i - p^{i-1})$ to the flow equation \eqref{eq:flow}. The free to be chosen parameter $L \ge 0$ is called the stabilization or tuning parameter. Its choice is the deciding element for the success of the algorithm, because the number of iterations (and therefore the speed of the algorithm) strongly depends on the value of $L$, see \cite{bause,jakubAML,jakubuwe,mikelicwang,dana}. Moreover, a too small or too big $L$ will lead to no convergence.

The initial derivation of the fixed-stress scheme had a physical motivation \cite{settari1998,Kim}: one 'fixes the (volumetric) stress', i.e.\ imposes $K_{dr}{\nabla \cdot \bf u}^i - \alpha p^i = K_{dr} \nabla \cdot {\bf u}^{i-1} - \alpha p^{i-1}$ and uses this to replace $\alpha {\nabla \cdot \bf u}^i$ in the flow equation. Here, $K_{dr}$ is the physical, drained bulk modulus. The resulting stabilization parameter $L$, called from now on the {\it physical} parameter, is $L_{phys} = \dfrac{\alpha^2} {K_{dr}}$ (it depends on the mechanics and the coupling coefficient). Consequently, $L_{phys}$ was the recommended value for the stabilization parameter, and the general opinion was that the method is not converging (it is not stable) for $L < L_{phys}$. In 2013, a rigorous mathematical analysis of the fixed-stress scheme was for the first time performed in \cite{andro}, where the authors show that the scheme is a contraction for any stabilization parameter $L \ge  \dfrac{L_{phys}} {2}$. This analysis was confirmed in \cite{jakubAML} for heterogeneous media, and by using a simpler technique. Noticeable, the same result was obtained also for both continuous or discontinuous Galerkin, higher order space-time elements in \cite{bause,bause2018}, implying that the values of the tuning parameter are not depending on the order of the used elements. A legitimate question arises immediately: is now $L_{phys}$ or  $\dfrac{L_{phys}} {2}$ the optimal stabilization parameter (optimal in the sense that the convergence of the scheme is fastest, i.e.\ the number of iteration is smallest)? The question is relevant, because, as mentioned already above, the number of iterations can differ considerably depending on the choice of the stabilization parameter \cite{bause,jakubAML,jakubuwe,mikelicwang} (unless one uses the fixed-stress scheme  as a preconditioner for a monolithic solver, as done in \cite{manuel, white} or as a smoother for a multigrid solver \cite{pacomultigrid}). The  aim of the present paper is to answer this open question.  

In a recent study \cite{jakubuwe}, the authors considered different numerical settings and looked at the convergence of the fixed-stress splitting scheme. They determined numerically the optimal stabilization parameter for each considered case. This study, together with the previous results presented in \cite{mikelicwang} and \cite{jakubAML} is suggesting that the optimal parameter is actually a value in the interval $\left[\dfrac{L_{phys}} {2}, L_{phys} \right]$, depending on the data. Especially, the optimal parameter depends on the boundary conditions and also on the flow parameters, not only on the mechanics and coupling coefficient. Nevertheless, to the best of our knowledge, there exists no theoretical evidence for this in the literature so far.

In this paper we show for the first time that the optimal stabilization parameter for the fixed-stress scheme is neither $\dfrac{L_{phys}} {2}$ nor $L_{phys}$, but depends also on the flow parameters. The values  $\dfrac{L_{phys}} {2}$, $L_{phys}$ are obtained as limit situations. We prove first that the fixed-stress scheme converges linearly and then derive a theoretical optimal parameter, by minimizing the rate of convergence. 
The proof techniques in \cite{jakubAML} are improved to reach the new results. For this we require the discretization to be inf-sup stable. Essentially, this allows us to control errors in the pressure by those in the stress. A consequence of our theoretical result is that the fixed-stress splitting scheme also converges in the limit case of low-compressible fluids and low-permeable porous media, which has been not proved before. Finally, we perform numerical computations to test the optimized parameter. As can be seen in Section~\ref{sec:numerics}, the numerical results are sustaining the theory. In particular, we remark the connection between inf-sup stability and the performance of the fixed-stress scheme: a not inf-sup stable discretization leads to non-monotonic behavior of the splitting scheme with respect to the problems parameters (e.g. the permeability).

To summarize, the main contributions of this work are:
\begin{itemize}
 \item an improved, theoretical convergence result for the fixed-stress splitting scheme under the assumption of an inf-sup stable discretization,

 \item the derivation of an optimized tuning parameter depending on both mechanics and fluid flow parameters, and
 
 \item the numerical evidence that not inf-sup stable discretizations lead to non-monotonic behavior of the fixed-stress scheme w.r.t. to data (e.g. the permeability).
 
\end{itemize}

We mention that the fixed-stress scheme can be also used for non-linear extensions of Biot's equations, see \cite{manuel} for non-linear water compressibility and \cite{jakub2018,jakubProcBiot,Mardal,Kraus} for unsaturated flow and mechanics. In these cases, one combines a linearization technique, e.g.\ the $L-$scheme \cite{list,pop} with the splitting algorithm. The convergence of the resulting scheme can be proved rigorously \cite{manuel,jakub2018}. The fixed-stress method has been as well applied in connection with fracture propagation \cite{scoti} and phase field models \cite{lee}. Finally, we would like to mention some valuable variants of the fixed-stress method: the multirate fixed-stress method \cite{kundan}, the multiscale fixed-stress method \cite{dana} and the parallel-in-time fixed-stress method \cite{paco2018}.

The paper is structured as follows. The notations, discretization and the fixed-stress scheme are presented in Sec. \ref{sec:discretization}. The analysis of the convergence and the optimization are the subject of Sec. \ref{sec:analysis}. In Sec. \ref{sec:numerics} a numerical example is presented. Finally, the conclusions are given in Sec. \ref{sec:conclusions}.


\section{The numerical scheme for solving Biot's model} \label{sec:discretization}

In this paper we use common notations in functional analysis. We denote by $\Omega \subset \mathbb{R}^d$ a Lipschitz domain, $d$ being the spatial dimension. Further, we make use of the spaces $L^2(\Omega)$ and $H^1(\Omega)$, where $L^2(\Omega)$ is the Hilbert space of Lebesgue measurable, square integrable functions on $\Omega$ and $H^1(\Omega)$ is the Hilbert space of functions in $L^2(\Omega)$ with derivatives (in the weak sense) in $L^2(\Omega)$. We denote by $\langle \cdot, \cdot \rangle$ and $\| \cdot \|$ the inner product and the associated norm of $L^2(\Omega)$ and by $\|\cdot\|_{H^1(\Omega)}$ the standard $H^1(\Omega)$-norm. Vectors and tensors are written bold, and sometimes the scalar product and the norm will be taken for vectors and tensors. $T$ will denote the final time.

We solve the Biot equations \eqref{eq:mechanics}--\eqref{eq:flow} on the domain $\Omega \times (0,T)$ together with homogeneous Dirichlet boundary conditions and some initial condition. This choice of boundary conditions is only for the ease of notation; all theoretical analysis can be done equivalently with arbitrary Dirichlet or Neumann boundary conditions. We discretize in time by using the backward Euler method. We consider an uniform grid, with the time step size $\tau := \dfrac{T}{N}, N \in \mathbb{N}$ and $t_n := n \tau, n \in \mathbb{N}$. Throughout this work, the index $n$ will refer to the time level.

For the spatial discretization we use a two-field formulation and introduce two generic discrete spaces ${\mathbf V}_h$ and $Q_h$, associated with displacements and pressures. The most prominent example is the Taylor-Hood element; P2-P1 for displacement and pressure. Nevertheless, the analysis below can be extended without difficulties to a three-field formulation as e.g.\ in \cite{berger,jakubAML,phillips} or other formulations.

In this way, the fully discrete, weak problem reads: \\[1ex]
Let $n\geq 1$ and assume $({\bm u}_h^{n-1}, p_h^{n-1}) \in {\mathbf V}_h\times Q_h$ are given. Find $({\bm u}_h^n, p_h^n) \in {\mathbf V}_h\times Q_h$ such that 
\begin{eqnarray}
2 \mu\langle\varepsilon(\bm u_h^n),\varepsilon(\bm v_h)\rangle + \lambda \langle \nabla\cdot {\bm u}_h^n, \nabla \cdot {\bm v}_h\rangle- \alpha\langle  p_h^n,\nabla \cdot {\bm v}_h\rangle  & = & \langle {\bm f}^n,{\bm v}_h\rangle, \label{eq:discrete:1}
\\
\frac{1}{M}\langle p_h^n - p_h^{n-1} ,q_h\rangle + \alpha\langle \nabla \cdot ({\bm u}_h^n - {\bm u}_h^{n-1}), q_h\rangle  \nonumber\\
+ \tau\langle \kappa \nabla p_h^n,\nabla q_h\rangle - \tau\langle \kappa{\bf g}\rho,\nabla q_h\rangle & = & \tau\langle S^n_f, q_h \rangle \label{eq:discrete:2}
\end{eqnarray}
for all ${\bm v}_h \in {\mathbf V}_h, q_h \in  Q_h$.
For $n=1$ the functions $({\bm u}_h^{n-1}, p_h^{n-1})$ are obtained by using the initial condition.

We can now introduce the fixed-stress splitting scheme. We denote by $i$ the iteration index and by $n\ge 1$ the time level. The {\bf fixed-stress splitting  scheme} reads \cite{settari1998,Kim,mikelicwang,jakubAML}:\\[1ex]
For $i \ge 1$, given $L\geq 0$, $({\bm u}_h^{n-1}, p_h^{n-1}), ({\bm u}_h^{n,i-1}, p_h^{n,i-1}) \in {\mathbf V}_h \times Q_h$ find $({\bm u}_h^{n,i}, p_h^{n,i}) \in {\mathbf V}_h\times Q_h$ such that 
\begin{eqnarray}
2 \mu\langle\varepsilon(\bm u_h^{n,i}), \varepsilon(\bm v_h)\rangle + \lambda \langle \nabla\cdot {\bm u}_h^{n,i}, \nabla \cdot {\bm v}_h\rangle - \alpha\langle  p_h^{n,i},\nabla \cdot {\bm v}_h\rangle & = &\langle {\bm f}^n,{\bm v}_h\rangle, \label{eq:fs:1}
\\
\frac{1}{M}\langle p_h^{n,i} - p_h^{n-1}, q_h\rangle + \alpha\langle \nabla \cdot ({\bm u}_h^{n,i-1} - {\bm u}_h^{n-1}),q_h\rangle \nonumber \\
+ L\langle p_h^{n,i}-p_h^{n,i-1},q_h\rangle  + \tau\langle \kappa\nabla p_h^{n,i},\nabla q_h\rangle - \tau\langle \kappa{\bf g}\rho,\nabla q_h\rangle & = &\tau\langle S^n_f, q_h \rangle \label{eq:fs:2}
\end{eqnarray}
for all ${\bm v}_h \in {\mathbf V}_h, q_h \in  Q_h$. We start the iterations with the solution at the last time step, i.e. $({\bm u}_h^{n,0}, p_h^{n,0}) := ({\bm u}_h^{n-1}, p_h^{n-1})$. We emphasize; the mechanics and flow problems decouple, allowing the use of separate simulators for both subproblems.

\section{Convergence analysis and optimization}\label{sec:analysis}
In this section we analyze the convergence of the scheme  \eqref{eq:fs:1}--\eqref{eq:fs:2}. We are in particular interested in finding an {\it optimal} stabilization parameter $L$. Before we proceed with the main result we need some preliminaries. 

Let $K_{dr}>0$ be defined as the coercivity constant 
\begin{equation}\label{Kdr}
2 \mu \|\varepsilon(\bm u)\|^2+\lambda \|\nabla \cdot{\bm u} \|^2 \geq K_{dr}\|\nabla \cdot{\bm u} \|^2\qquad\text{for all }\bm{u} \in {\mathbf V}_h.
\end{equation}
One can easily prove that the inequality above holds for the physical, drained bulk modulus $K_{dr}=\frac{2\mu}{d}+\lambda$, where $d$ is the spatial dimension. In practice, for effectively lower-dimensional situations, e.g.\ one-dimensional compression, $d$ can be chosen smaller than the spatial dimension, as~\eqref{Kdr} is assumed to hold only for a relevant subset of displacements $\bm{u}$, cf.\ proof of Theorem~\ref{theorem1}. For a more detailed discussion on values for $K_{dr}$ we refer to \cite{jakubuwe} and Section~\ref{section:random-parabola}. Consistent with the literature, in the remaining article, despite the discrepancy between $K_{dr}$ and the physically well-defined bulk modulus, we continue calling $K_{dr}$ the drained bulk modulus independent of its value. 

Throughout this paper we make use of the following two assumptions:\\[1ex]
{\bf Assumption (A1)}. All the constants $\mu, \lambda, M, K_{dr}, \alpha, \kappa, \rho$ are strictly positive. The vector $\bm{g}$ is constant.\\[1ex]
{\bf Assumption (A2)}. The discretization $\bm{V}_h\times Q_h$ is inf-sup stable.
\begin{remark} The assumptions made above are valid in nearly all the relevant physical situations. Therefore, our analysis has a very wide application range.
 \end{remark}
 From the second assumption follows Lemma \ref{Lemma1} by applying Corollary 4.1.1 in \cite{brezzi}, which states:
\begin{corollary}\label{cor:brezzi}
Let $V$ and $Q$ be Hilbert spaces, and let $B$ be a linear continuous operator from $V$ to $Q'$. Denote by $B^t$ the transposed operator of $B$. Then, the following two statements are equivalent:
\begin{itemize}
\item $B^t$ is bounding: $\exists \gamma > 0$ such that $\|B^tq\|_{V'}\geq\gamma\|q\|_Q$ $\forall q \in Q$
\item $\exists L_B \in \mathcal{L}(Q', V) $ such that $B(L_B(\xi))=\xi$  $\forall \xi\in Q'$ with $\|L_b\|=\dfrac{1}{\gamma}.$
\end{itemize}
\end{corollary}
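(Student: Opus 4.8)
The plan is to prove the two implications of the equivalence separately, with the forward direction resting on Banach's closed range theorem and the reverse direction being an elementary duality computation. Throughout I would identify the Hilbert space $Q$ with its bidual, so that the transpose acts as $B^t : Q \to V'$ and the pairing identity $\langle B^t q, v\rangle_{V',V} = \langle Bv, q\rangle_{Q',Q}$ holds for all $v \in V$ and $q \in Q$; getting these duality conventions straight at the outset is what makes the rest clean.

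For the direction from the bounding property to the existence of $L_B$, first I would observe that the inequality $\|B^t q\|_{V'} \geq \gamma \|q\|_Q$ forces $B^t$ to be injective with closed range: injectivity is immediate, and closedness follows because any Cauchy sequence of images $B^t q_n$ pulls back, via the same inequality, to a Cauchy sequence of arguments $q_n$. Banach's closed range theorem then yields that $\mathrm{Range}(B)$ is closed and equals the pre-annihilator of $\ker B^t$; since $\ker B^t = \{0\}$, this gives $\mathrm{Range}(B) = Q'$, i.e. $B$ is surjective. Restricting $B$ to $(\ker B)^\perp$ produces a continuous bijection onto $Q'$, whose inverse is bounded by the open mapping theorem, and I would take $L_B$ to be this minimal-norm right inverse.

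The delicate point, and the step I expect to be the main obstacle, is pinning down the \emph{sharp} operator norm $\|L_B\| = 1/\gamma$ rather than merely a finite bound. The key is the dual characterization of the minimal-norm solution: for $\xi \in Q'$ the element $v = L_B\xi$ should satisfy $\|v\|_V = \sup_{q \neq 0} \langle \xi, q\rangle_{Q',Q} / \|B^t q\|_{V'}$. Inserting the bounding inequality into the denominator then gives $\|v\|_V \leq \gamma^{-1}\sup_{q} \langle \xi, q\rangle_{Q',Q}/\|q\|_Q = \gamma^{-1}\|\xi\|_{Q'}$, hence $\|L_B\| \leq 1/\gamma$, with equality when $\gamma$ is taken as the best (largest) admissible bounding constant. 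Justifying this duality formula rigorously, which requires relating the transpose $B^t$ to the Hilbert adjoint through Riesz representation and confirming that the relevant supremum is attained, is where the real work lies.

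The reverse implication is short. Given $L_B$ with $B L_B = \mathrm{Id}_{Q'}$ and $\|L_B\| = 1/\gamma$, for any $q \in Q$ I would write $\|q\|_Q = \sup_{\xi \neq 0}\langle \xi, q\rangle_{Q',Q}/\|\xi\|_{Q'} = \sup_\xi \langle B L_B\xi, q\rangle_{Q',Q}/\|\xi\|_{Q'} = \sup_\xi \langle B^t q, L_B\xi\rangle_{V',V}/\|\xi\|_{Q'}$, and then bound the pairing by Cauchy--Schwarz together with $\|L_B\xi\|_V \leq \gamma^{-1}\|\xi\|_{Q'}$ to obtain $\|q\|_Q \leq \gamma^{-1}\|B^t q\|_{V'}$, which is precisely the bounding property. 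This closes the equivalence.
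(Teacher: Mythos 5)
This corollary is not proved in the paper at all: it is quoted as Corollary 4.1.1 of Boffi--Brezzi--Fortin \cite{brezzi} and used as a black box to establish Lemma~\ref{Lemma1}, so there is no in-paper argument to compare against and your proposal has to stand on its own. It does. Your plan is the standard proof of this lifting result: the bounding inequality gives injectivity and closed range of $B^t$ (your Cauchy-sequence argument is correct), the closed range theorem upgrades this to surjectivity of $B$, and the pseudo-inverse $L_B$ obtained by inverting $B$ on $(\ker B)^\perp$ is the required lifting; the reverse implication as you wrote it is complete. The one step you flag as ``the real work'' --- the formula $\|L_B\xi\|_V=\sup_{q\neq 0}\langle \xi,q\rangle_{Q',Q}/\|B^tq\|_{V'}$ --- does close with exactly the tools you name, and in fact with facts you have already established: under the Riesz map $R_V:V\to V'$ one has $\ker B=\bigl(R_V^{-1}\mathrm{Range}(B^t)\bigr)^{\perp}$, and since $\mathrm{Range}(B^t)$ is closed this gives $(\ker B)^{\perp}=R_V^{-1}\mathrm{Range}(B^t)$; hence for $v=L_B\xi\in(\ker B)^{\perp}$ there is $q$ with $B^tq=R_Vv$, and this $q$ attains the supremum. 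Interchanging the two suprema in $\|L_B\|=\sup_{\xi}\sup_q \langle\xi,q\rangle/(\|\xi\|_{Q'}\|B^tq\|_{V'})$ then yields $\|L_B\|=\sup_q\|q\|_Q/\|B^tq\|_{V'}$, i.e.\ exactly $1/\gamma$ for the optimal $\gamma$. Finally, your caveat about the constant is not pedantry but necessary for the statement to be literally true: if $\gamma$ is a non-optimal bounding constant and $B$ happens to be injective, the right inverse is unique and has norm $1/\gamma_{\mathrm{opt}}<1/\gamma$, so no lifting with $\|L_B\|=1/\gamma$ exists; the equality in the second bullet forces $\gamma$ to be the best constant (or one must read $\|L_B\|\le 1/\gamma$). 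This looseness is in the corollary as transcribed in the paper, not in your argument.
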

\begin{lemma}\label{Lemma1} Assume (A2). There exists $\beta >0$ such that for any $p_h \in Q_h$ there exists ${\bm u}_h \in {\bf V}\!_h $ satisfying
$\langle \nabla \cdot {\bm u_h} ,q_h \rangle = \langle p_h, q_h \rangle $ for all $q_h \in Q_h$ and 
\begin{equation}
2 \mu\|{\bm \varepsilon(\bm u_h })\|^2 + \lambda \| \nabla\cdot {\bm u_h}\|^2\leq \beta \|p_h\|^2. \label{beta}
\end{equation}
\end{lemma}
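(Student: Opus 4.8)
The plan is to recast the constraint $\langle \nabla \cdot \bm{u}_h, q_h\rangle = \langle p_h, q_h\rangle$ as the solvability of a linear operator equation and then apply Corollary~\ref{cor:brezzi} directly. To this end, introduce the discrete divergence operator $B \colon \mathbf{V}_h \to Q_h'$ defined by $\langle B\bm{v}_h, q_h\rangle = \langle \nabla\cdot\bm{v}_h, q_h\rangle$ for all $\bm{v}_h \in \mathbf{V}_h$, $q_h \in Q_h$, where $\mathbf{V}_h$ carries the $H^1(\Omega)$-norm and $Q_h$ the $L^2(\Omega)$-norm. With this identification, Assumption (A2), i.e.\ the inf-sup stability of $\mathbf{V}_h \times Q_h$, is precisely the statement that the transposed operator $B^t$ is bounding: there exists $\gamma > 0$ with $\|B^t q_h\|_{\mathbf{V}_h'} \geq \gamma \|q_h\|$ for all $q_h \in Q_h$.

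First I would invoke Corollary~\ref{cor:brezzi} with $V = \mathbf{V}_h$ and $Q = Q_h$. Since $B^t$ is bounding, the corollary furnishes a right inverse $L_B \in \mathcal{L}(Q_h', \mathbf{V}_h)$ with $B(L_B(\xi)) = \xi$ for all $\xi \in Q_h'$ and $\|L_B\| = 1/\gamma$. Given $p_h \in Q_h$, I would then define the functional $\xi_{p_h} \in Q_h'$ by $\xi_{p_h}(q_h) := \langle p_h, q_h\rangle$ and set $\bm{u}_h := L_B(\xi_{p_h})$. By construction $B\bm{u}_h = \xi_{p_h}$, which unfolds to the required constraint $\langle \nabla\cdot\bm{u}_h, q_h\rangle = \langle p_h, q_h\rangle$ for all $q_h \in Q_h$.

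It remains to establish the bound~\eqref{beta}. The dual norm of $\xi_{p_h}$ is computed directly: Cauchy--Schwarz gives $\|\xi_{p_h}\|_{Q_h'} \leq \|p_h\|$, and testing with the admissible function $q_h = p_h \in Q_h$ shows the reverse inequality, so $\|\xi_{p_h}\|_{Q_h'} = \|p_h\|$. Boundedness of $L_B$ then yields $\|\bm{u}_h\|_{H^1(\Omega)} \leq \|L_B\| \, \|\xi_{p_h}\|_{Q_h'} = \gamma^{-1}\|p_h\|$. Finally, the energy quantity on the left-hand side of~\eqref{beta} is controlled by the $H^1(\Omega)$-norm through continuity of the mechanics bilinear form, $2\mu\|\varepsilon(\bm{u}_h)\|^2 + \lambda\|\nabla\cdot\bm{u}_h\|^2 \leq C_{\mathrm{cont}}\|\bm{u}_h\|_{H^1(\Omega)}^2$, so that the claim follows with $\beta = C_{\mathrm{cont}}/\gamma^2$.

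The argument is essentially bookkeeping once the operator framework is in place, so there is no deep obstacle; the only point requiring care is the consistent handling of norms. One must fix the inf-sup condition in the $H^1$--$L^2$ pairing, track the continuity constant $C_{\mathrm{cont}}$ of the elasticity form (which may be folded into a single $\beta$), and measure $\xi_{p_h}$ in the $L^2$-based dual norm matching that pairing. If instead $\mathbf{V}_h$ is equipped directly with the energy norm $2\mu\|\varepsilon(\cdot)\|^2 + \lambda\|\nabla\cdot(\cdot)\|^2$, the continuity step is trivial and one obtains $\beta = \gamma^{-2}$ immediately.
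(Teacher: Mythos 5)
Your proposal is correct and follows essentially the same route as the paper's own proof: both identify the discrete divergence operator $B$, apply Corollary~\ref{cor:brezzi} to obtain the right inverse $L_B$ with $\|L_B\|=1/\gamma$ from the inf-sup condition, and conclude via $2\mu\|\varepsilon(\bm u_h)\|^2+\lambda\|\nabla\cdot\bm u_h\|^2\leq C\|\bm u_h\|_{H^1(\Omega)}^2\leq \left(C/\gamma^2\right)\|p_h\|^2$. If anything, your write-up is slightly cleaner in making explicit that $\|\xi_{p_h}\|_{Q_h'}=\|p_h\|$ and in correctly attributing the final estimate to continuity of the elasticity form rather than to Young's inequality.
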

\begin{proof}
Consider \ Corollary \ref{cor:brezzi} and let the continuous linear function from ${\bf V}_h$ to $Q_h'$ be $B(\bm u_h)(q_h) = \langle \nabla \cdot \bm u_h , q_h \rangle$. The first statement is the characterization of an inf-sup stable discretization, with inf-sup constant $\gamma$. Considering the second statement we have the existence of a linear function $L_B \in \mathcal{L}(Q_h',\bm{V}_h)$ such that $B(L_B(\langle p_h,\cdot\rangle))= \langle p_h,\cdot \rangle$ for all $p_h \in Q_h$ with $\| L_B\| = 1/\gamma$. Hence $L_B$ is giving us for each $p_h \in Q_h$ the corresponding $\bm u_h \in {\bf V}_h$ such that 
$$ \langle \nabla \cdot \bm u_h, q_h \rangle = B(L_B(\langle p_h,\cdot\rangle ) )(q_h) = \langle p_h, q_h\rangle$$ for all $q_h \in Q_h$. Now the following chain of inequalities holds true,
$$2 \mu\|{\bm \varepsilon(\bm u_h })\|^2 + \lambda \| \nabla\cdot {\bm u_h}\|^2\leq C \|\bm u_h\|_{H^1(\Omega)}^2 \leq C\|L_B\|^2\|p_h\|^2$$ where the first one follows from Young's inequality with $C$ depending only on the Lam\'e parameters, and the second inequality results from the operator norm, $$\|L_B\|=\sup_{p_h\in Q_h,\ p_h\neq 0} \frac{\|L_B(\langle p_h, \cdot \rangle)\|_{H^1(\Omega)}}{\|\langle p_h, \cdot\rangle\|_{Q_h'}}= \sup_{\scriptsize \substack{p_h\in Q_h,p_h\neq 0 \\ \bm{u}_h = L_B(\langle p_h, \cdot\rangle)}} \frac{\|\bm u_h\|_{H^1(\Omega)}}{\|p_h\|_{Q_h}}.$$ Then we have our desired inequality, $$2 \mu\|{\bm \varepsilon(\bm u_h })\|^2 + \lambda \| \nabla\cdot {\bm u_h}\|^2\leq \frac{C}{\gamma^2}\|p_h\|^2=\beta\|p_h\|^2.$$

\end{proof}

\begin{remark} 
The constant $\beta$ above depends on $\mu$, $\lambda$, the domain $\Omega$ and on the choice of the finite dimensional spaces $ {\mathbf V}_h$ and $Q_h$. For more information see for example \cite{zsuppan}. 
\end{remark}

We can now give our main convergence result.
\begin{theorem}\label{theorem1}
Assume that (A1) and (A2) hold true and let $\delta\in(0,2]$. Define the iteration errors as ${\bm{e}^{n,i}_{\bm{u}} }:={\bm u}_h^{n,i}-{\bm u}_h^n$ and $e_p^{n,i}:=p_h^{n,i}-p_h^n$ where $\bm u_h^{n,i},p_h^{n,i}$ are solutions to \eqref{eq:fs:1}--\eqref{eq:fs:2} and $\bm u_h^n, p_h^n$ are solutions to \eqref{eq:discrete:1}--\eqref{eq:discrete:2}. The fixed-stress splitting scheme \eqref{eq:fs:1}--\eqref{eq:fs:2}  converges linearly for any $L \geq \frac{\alpha^2}{\delta K_{dr}}$, with a convergence rate given by
\begin{equation}\label{firstrate}
\mathrm{rate}(\delta)=\frac{L}{L+\frac{2}{M}+\frac{2\tau\kappa}{C_\Omega^2}+(2-\delta)\frac{\alpha^2}{\beta}},
\end{equation}
through the error inequalities
\begin{equation}\label{eq:presconvergence}
\|e_p^{n,i}\|^2\leq \mathrm{rate}(\delta)\|e_p^{n,i-1}\|^2
\end{equation}
\begin{equation}\label{eq:dispconvergence}
2\mu\|\varepsilon( \bm{e}_{\bm{u}}^{n,i})\|^2+\lambda\| \nabla\cdot \bm{e}_{\bm{u}}^{n,i}\|^2\leq \frac{\alpha^2}{K_{dr}}\|e_p^{n,i}\|^2
\end{equation}
where  $C_\Omega$ the Poincar\'e constant and $\beta$ the constant from \eqref{beta}.
\end{theorem}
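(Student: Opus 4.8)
The plan is to work entirely with the error equations. Subtracting the fully discrete system \eqref{eq:discrete:1}--\eqref{eq:discrete:2} from the fixed-stress iteration \eqref{eq:fs:1}--\eqref{eq:fs:2}, and abbreviating the mechanics energy form by $a(\bm u,\bm v):=2\mu\langle\varepsilon(\bm u),\varepsilon(\bm v)\rangle+\lambda\langle\nabla\cdot\bm u,\nabla\cdot\bm v\rangle$, all data cancel and I am left with two homogeneous identities: the mechanics error equation
\[ a(\bm{e}_{\bm u}^{n,i},\bm v_h)=\alpha\langle e_p^{n,i},\nabla\cdot\bm v_h\rangle\qquad\text{for all }\bm v_h\in{\bf V}_h, \]
and the flow error equation
\[ \tfrac1M\langle e_p^{n,i},q_h\rangle+\alpha\langle\nabla\cdot\bm{e}_{\bm u}^{n,i-1},q_h\rangle+L\langle e_p^{n,i}-e_p^{n,i-1},q_h\rangle+\tau\langle\kappa\nabla e_p^{n,i},\nabla q_h\rangle=0 \]
for all $q_h\in Q_h$. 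The whole argument is then a suitable testing of these two relations.

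Before the main estimate I would record two bounds on the displacement energy $E_i:=a(\bm{e}_{\bm u}^{n,i},\bm{e}_{\bm u}^{n,i})$. Testing the mechanics error equation with $\bm v_h=\bm{e}_{\bm u}^{n,i}$ gives $E_i=\alpha\langle e_p^{n,i},\nabla\cdot\bm{e}_{\bm u}^{n,i}\rangle$; combining Cauchy--Schwarz with the coercivity \eqref{Kdr} yields $\|\nabla\cdot\bm{e}_{\bm u}^{n,i}\|\le\frac{\alpha}{K_{dr}}\|e_p^{n,i}\|$ and hence the displacement estimate \eqref{eq:dispconvergence}, $E_i\le\frac{\alpha^2}{K_{dr}}\|e_p^{n,i}\|^2$. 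For the opposite direction I would invoke Lemma~\ref{Lemma1}: picking $\bm w_h\in{\bf V}_h$ with $\langle\nabla\cdot\bm w_h,q_h\rangle=\langle e_p^{n,i},q_h\rangle$ and $a(\bm w_h,\bm w_h)\le\beta\|e_p^{n,i}\|^2$, testing the mechanics error equation with $\bm v_h=\bm w_h$ gives $\alpha\|e_p^{n,i}\|^2=a(\bm{e}_{\bm u}^{n,i},\bm w_h)\le\sqrt{E_i}\sqrt{\beta}\,\|e_p^{n,i}\|$, i.e.\ the inf-sup bound $\frac{\alpha^2}{\beta}\|e_p^{n,i}\|^2\le E_i$. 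This reverse inequality is the new ingredient, and it is exactly what injects the inf-sup constant $\beta$ into the rate.

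The core is the flow error equation tested with $q_h=e_p^{n,i}$. The obstacle is the mixed term $\alpha\langle\nabla\cdot\bm{e}_{\bm u}^{n,i-1},e_p^{n,i}\rangle$, coupling the old displacement error to the new pressure error; estimating it crudely would spoil the sharp threshold. The decisive step is to rewrite it exactly rather than bound it. Splitting $\nabla\cdot\bm{e}_{\bm u}^{n,i-1}=\nabla\cdot\bm{e}_{\bm u}^{n,i}-\nabla\cdot(\bm{e}_{\bm u}^{n,i}-\bm{e}_{\bm u}^{n,i-1})$, the first piece equals $\alpha\langle\nabla\cdot\bm{e}_{\bm u}^{n,i},e_p^{n,i}\rangle=E_i$, while the second, upon testing the mechanics error equation at level $i$ with $\bm v_h=\bm{e}_{\bm u}^{n,i}-\bm{e}_{\bm u}^{n,i-1}$, equals $a(\bm{e}_{\bm u}^{n,i},\bm{e}_{\bm u}^{n,i}-\bm{e}_{\bm u}^{n,i-1})$; thus the mixed term is exactly $E_i-a(\bm{e}_{\bm u}^{n,i},\bm{e}_{\bm u}^{n,i}-\bm{e}_{\bm u}^{n,i-1})$, keeping a full copy of $E_i$ available. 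After applying the polarization identity $L\langle e_p^{n,i}-e_p^{n,i-1},e_p^{n,i}\rangle=\frac L2(\|e_p^{n,i}\|^2-\|e_p^{n,i-1}\|^2+\|e_p^{n,i}-e_p^{n,i-1}\|^2)$, I would split the surviving energy cross term by Cauchy--Schwarz and Young with respect to $a(\cdot,\cdot)$ using the free parameter $\delta$,
\[ a(\bm{e}_{\bm u}^{n,i},\bm{e}_{\bm u}^{n,i}-\bm{e}_{\bm u}^{n,i-1})\le\tfrac\delta2 E_i+\tfrac1{2\delta}D,\qquad D:=a(\bm{e}_{\bm u}^{n,i}-\bm{e}_{\bm u}^{n,i-1},\bm{e}_{\bm u}^{n,i}-\bm{e}_{\bm u}^{n,i-1}), \]
leaving a net $(1-\tfrac\delta2)E_i$ on the left, which requires $\delta\le2$.

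Finally I would close the estimate. Testing the difference of the mechanics error equations at levels $i$ and $i-1$ with the increment, together with \eqref{Kdr}, controls the increment energy by $D\le\frac{\alpha^2}{K_{dr}}\|e_p^{n,i}-e_p^{n,i-1}\|^2$; this is where $K_{dr}$ enters, and the assumption $L\ge\frac{\alpha^2}{\delta K_{dr}}$ is precisely what makes the $\tfrac1{2\delta}D$ contribution absorb into the nonnegative $\frac L2\|e_p^{n,i}-e_p^{n,i-1}\|^2$ coming from polarization, so those increment terms can be dropped. The remaining $(1-\tfrac\delta2)E_i$ is bounded below by the inf-sup estimate $\frac{\alpha^2}{\beta}\|e_p^{n,i}\|^2$, the diffusion term by Poincar\'e, $\tau\langle\kappa\nabla e_p^{n,i},\nabla e_p^{n,i}\rangle\ge\frac{\tau\kappa}{C_\Omega^2}\|e_p^{n,i}\|^2$, and the $\frac1M$-term is kept. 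Collecting the coefficients of $\|e_p^{n,i}\|^2$ and $\|e_p^{n,i-1}\|^2$ and multiplying by two gives \eqref{eq:presconvergence} with exactly the rate \eqref{firstrate}. The main obstacle, and the step I would treat most carefully, is this exact algebraic replacement of the mixed term: only by keeping $E_i$ intact can the inf-sup constant $\beta$ govern the convergence rate while the drained modulus $K_{dr}$ alone governs the admissible range of $L$, with $\delta$ interpolating between the two classical thresholds $L_{phys}/2$ (at $\delta=2$) and $L_{phys}$ (at $\delta=1$).
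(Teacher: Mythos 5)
Your proposal is correct and follows essentially the same route as the paper: the same error equations, the same exact treatment of the mixed term via the mechanics error equation tested with $\bm{e}_{\bm u}^{n,i}$ and with the increment $\bm{e}_{\bm u}^{n,i}-\bm{e}_{\bm u}^{n,i-1}$, Young's inequality with the free parameter $\delta$, control of the increment energy through \eqref{Kdr}, absorption using $L\geq\frac{\alpha^2}{\delta K_{dr}}$, Poincar\'e for the diffusion term, and Lemma~\ref{Lemma1} to bound $\frac{\alpha^2}{\beta}\|e_p^{n,i}\|^2$ by the displacement energy. The only cosmetic deviation is that you obtain \eqref{eq:dispconvergence} by chaining Cauchy--Schwarz with coercivity instead of the paper's Cauchy--Schwarz plus Young, which yields the identical bound.
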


\begin{proof}
Subtract \eqref{eq:fs:1}, \eqref{eq:fs:2} from \eqref{eq:discrete:1}, \eqref{eq:discrete:2}, respectively,  to obtain the error equations
\begin{equation}\label{bioterror}
\begin{cases}
(i) \ \ 2 \mu\langle \varepsilon( \bm{e}_{\bm{u}}^{n,i}), \varepsilon( \bm v_h)\rangle + \lambda \langle \nabla\cdot \bm{e}_{\bm{u}}^{n,i}, \nabla \cdot {\bm v}_h\rangle- \alpha\langle  e_p^{n,i},\nabla \cdot {\bm v}_h\rangle  = 0
\\
(ii) \ \ \frac{1}{M}\langle e_p^{n,i},q_h\rangle + \alpha\langle \nabla \cdot \bm{e}_{\bm{u}}^{n,i-1},q_h\rangle +L\langle e_p^{n,i}-e_p^{n,i-1},q_h\rangle + \tau \langle \kappa \nabla e_p^{n,i},\nabla q_h\rangle = 0.
\end{cases}
\end{equation}
To prove \eqref{eq:dispconvergence} test \eqref{bioterror}(i) with $\bm v_h=\bm{e}^{n,i}_{\bm{u}}$, and apply the Cauchy Schwarz inequality and Young's inequality to the pressure term to obtain
\begin{equation}
2\mu\|\varepsilon( \bm{e}_{\bm{u}}^{n,i})\|^2+\lambda\| \nabla\cdot \bm{e}_{\bm{u}}^{n,i}\|^2\leq \frac{\alpha^2}{2K_{dr}}\|e_p^{n,i}\|^2+\frac{K_{dr}}{2}\|\nabla\cdot \bm{e}_{\bm{u}}^{n,i}\|^2.
\end{equation}
We now get \eqref{eq:dispconvergence} by applying \eqref{Kdr}.

In order to prove  \eqref{eq:presconvergence} test \eqref{bioterror} with $q_h = e_p^{n,i}$ and $\bm v_h =\bm{e}_{\bm{u}}^{n,i}$, add the resulting equations and use the algebraic identity
$$\langle 	e_p^{n,i}-e_p^{n,i-1},e_p^{n,i}\rangle= \frac{1}{2}\left(\| e_p^{n,i}-e_p^{n,i-1}\|^2 + \|e_p^{n,i}\|^2 - \|e_p^{n,i-1}\|^2\right)$$
 to get 
\begin{align*}
& 2 \mu\| \varepsilon( \bm{e}_{\bm{u}}^{n,i})\|^2 + \lambda \| \nabla\cdot \bm{e}_{\bm{u}}^{n,i}\|^2+ \frac{1}{M}\| e_p^{n,i}\|^2 - \alpha \langle  e_p^{n,i},\nabla \cdot (\bm{e}_{\bm{u}}^{n,i} - \bm{e}_{\bm{u}}^{n,i-1}) \rangle \\&\quad+ \tau\kappa\|\nabla e_p^{n,i}\|^2 + \frac{L}{2}\|e_p^{n,i}-e_p^{n,i-1}\|^2 + \frac{L}{2}\|e_p^{n,i}\|^2 = \frac{L}{2}\|e_p^{n,i-1}\|^2.
\end{align*}
Using now equation \eqref{bioterror}($i$) tested with $\bm v_h=\bm{e}_{\bm{u}}^{n,i}-\bm{e}_{\bm{u}}^{n,i-1}$  in the above yields
\begin{align}\label{rearrangetoget}
& 2 \mu\|\varepsilon(\bm{e}_{\bm{u}}^{n,i})\|^2 + \lambda \| \nabla\cdot \bm{e}_{\bm{u}}^{n,i}\|^2+ \frac{1}{M}\| e_p^{n,i}\|^2 + \tau \kappa \| \nabla e_p^{n,i}\|^2 + \frac{L}{2}\|e_p^{n,i}\|^2 \nonumber
\\&\quad  + \frac{L}{2}\| e_p^{n,i}-e_p^{n,i-1}\|^2 = \frac{L}{2}\|e_p^{n,i-1}\|^2 + 2 \mu\langle\varepsilon( \bm{e}_{\bm{u}}^{n,i}),\varepsilon( \bm{e}_{\bm{u}}^{n,i}-\bm{e}_{\bm{u}}^{n,i-1})\rangle 
\\ & \nonumber \quad \hspace*{4cm}+ \lambda \langle \nabla\cdot \bm{e}_{\bm{u}}^{n,i}, \nabla \cdot (\bm{e}_{\bm{u}}^{n,i}-\bm{e}_{\bm{u}}^{n,i-1})\rangle.
\end{align}
By applying Young's inequality in \eqref{rearrangetoget} we obtain that for any $\delta> 0$ there holds
\begin{align}\label{postyoung}
& 2 \mu\|\varepsilon( \bm{e}_{\bm{u}}^{n,i})\|^2 + \lambda \| \nabla\cdot \bm{e}_{\bm{u}}^{n,i}\|^2+ \frac{1}{M}\| e_p^{n,i}\|^2 + \tau \kappa \|\nabla e_p^{n,i}\|^2 + \frac{L}{2}\|e_p^{n,i}\|^2 \nonumber
\\&\quad  + \frac{L}{2}\| e_p^{n,i}-e_p^{n,i-1}\|^2 = \frac{L}{2}\|e_p^{n,i-1}\|^2 +\frac{\delta}{2}(2\mu\|\varepsilon(\bm{e}_{\bm{u}}^{n,i})\|^2+\lambda\|\nabla\cdot \bm{e}_{\bm{u}}^{n,i}\|^2)
\\&\quad \quad\nonumber +\frac{1}{2\delta}(2\mu\|\varepsilon( \bm{e}_{\bm{u}}^{n,i}-\bm{e}_{\bm{u}}^{n,i-1})\|^2+\lambda\|\nabla \cdot (\bm{e}_{\bm{u}}^{n,i}-\bm{e}_{\bm{u}}^{n,i-1})\|^2).
\end{align}
To take care of the last term in \eqref{postyoung} consider equation \eqref{bioterror}($i$). Subtract iteration $i-1$ from iteration $i$, let ${\bm v_h}=\bm{e}_{\bm{u}}^{n,i}-\bm{e}_{\bm{u}}^{n,i-1}$ in the result  and apply the Cauchy-Schwarz inequality to get
\begin{align}\label{postcauchy}
2\mu \|\varepsilon( \bm{e}_{\bm{u}}^{n,i})-\varepsilon(\bm{e}_{\bm u}^{n,i-1})\|^2+\lambda\|\nabla\cdot (\bm{e}_{\bm{u}}^{n,i}-\bm{e}_{\bm{u}}^{n,i-1})\|^2
\leq \alpha \|e_p^{n,i}- e_p^{n,i-1}\|\|\nabla \cdot (\bm{e}_{\bm{u}}^{n,i}-\bm{e}_{\bm{u}}^{n,i-1})\|.
\end{align}
By using now \eqref{Kdr}, \eqref{postcauchy} implies
\begin{equation}\label{postKdr}
K_{dr}\|\nabla \cdot(\bm{e}_{\bm{u}}^{n,i}-\bm{e}_{\bm{u}}^{n,i-1})\| \leq \alpha \|e_p^{n,i}- e_p^{n,i-1}\|.
\end{equation}
Inserting~\eqref{postKdr} into \eqref{postcauchy}, yields
\begin{equation}\label{postcombined}
2\mu \|\varepsilon( \bm{e}_{\bm{u}}^{n,i})-\varepsilon(\bm{e}_{\bm u}^{n,i-1})\|^2+\lambda\|\nabla\cdot (\bm{e}_{\bm{u}}^{n,i}-\bm{e}_{\bm{u}}^{n,i-1})\|^2 \leq \frac{\alpha^2}{K_{dr}} \|e_p^{n,i}- e_p^{n,i-1}\|^2.
\end{equation}
By rearranging terms and inserting~\eqref{postcombined} into~\eqref{postyoung}, we immediately get
\begin{align*}
& \left(1-\frac{\delta}{2}\right)(2 \mu\|\varepsilon(\bm{e}_{\bm{u}}^{n,i})\|^2 + \lambda \| \nabla\cdot \bm{e}_{\bm{u}}^{n,i}\|^2)+ \frac{1}{M}\| e_p^{n,i}\|^2 + \tau \kappa \| \nabla e_p^{n,i}\|^2 + \frac{L}{2}\|e_p^{n,i}\|^2 
\\&\quad+ \frac{L}{2}\| e_p^{n,i}-e_p^{n,i-1}\|^2  \leq \frac{L}{2}\|e_p^{n,i-1}\|^2  +\dfrac{\alpha^2}{2\delta K_{dr}} \|e_p^{n,i}- e_p^{n,i-1}\|^2.
\end{align*}
Using that $L\geq \dfrac{\alpha^2}{\delta K_{dr}}$ and Poincar\'e's inequality we obtain from the above 
\begin{equation}\label{firstconvergence}
\left(1-\frac{\delta}{2}\right)(2 \mu\|\varepsilon(\bm{e}_{\bm{u}}^{n,i})\|^2 + \lambda \| \nabla\cdot \bm{e}_{\bm{u}}^{n,i}\|^2)+ \left(\frac{1}{M}+\frac{L}{2}+\frac{\tau\kappa}{C^2_\Omega}\right)\| e_p^{n,i}\|^2 \leq\frac{L}{2}\|e_p^{n,i-1}\|^2. 
\end{equation}
The result \eqref{firstconvergence} already implies that we have convergence of the scheme. 
In previous works, especially \cite{jakubAML} the conclusion at this point was that $L=\dfrac{\alpha^2}{2K_{dr}}$ is the optimal parameter. However, this does not consider the influence of the first term in \eqref{firstconvergence}. 
By Lemma \ref{Lemma1} we get that there exists ${\bm v_h \in V}\!_h$ such that $e_p^{n,i}=\nabla \cdot {\bm v_h}$ in a weak sense and 
\begin{equation}\label{postlemma2}
2 \mu\|\varepsilon( \bm v_h)\|^2 + \lambda \| \nabla\cdot {\bm v_h}\|^2\leq \beta \|e_p^{n,i}\|^2.
\end{equation}
By testing now \eqref{bioterror}($i$) with this ${\bm v}_h$, we get 
\begin{equation}\label{eq:5}
\alpha\|e_p^{n,i}\|^2  = 2 \mu\langle\varepsilon(\bm{e}_{\bm{u}}^{n,i}),\varepsilon( {\bm v}_h )\rangle + \lambda \langle \nabla\cdot \bm{e}_{\bm{u}}^{n,i}, \nabla \cdot {\bm v}_h\rangle.
\end{equation}
From \eqref{postlemma2} and \eqref{eq:5} and the Cauchy-Schwarz inequality we immediately obtain
\begin{equation}
\frac{\alpha^2}{\beta}\|e_p^{n,i}\|^2 \leq 2 \mu \|\varepsilon( \bm{e}_{\bm{u}}^{n,i})\|^2+ \lambda \|\nabla\cdot \bm{e}_{\bm{u}}^{n,i}\|^2,
\end{equation}
which together with  \eqref{firstconvergence} implies
\begin{equation}\label{secondconvergence}
\left(\frac{1}{M}+\frac{L}{2}+\frac{\tau\kappa}{C^2_\Omega}+\left(1-\frac{\delta}{2}\right) \frac{\alpha^2}{\beta}\right)\| e_p^{n,i}\|^2 \leq\frac{L}{2}\|e_p^{n,i-1}\|^2. \nonumber
\end{equation}
This gives the rate of convergence,  for $\delta\in(0,2]$ and $L\geq \dfrac{\alpha^2}{\delta K_{dr}}$
\begin{equation} 
\mathrm{rate}(\delta)=\frac{L}{L+\frac{2}{M}+\frac{2\tau\kappa}{C^2_\Omega}+(2-\delta)\frac{\alpha^2}{\beta}}.\nonumber
\end{equation}
\end{proof}
\begin{remark} One can easily extend the result for a heterogeneous media, i.e.\  $\kappa = \kappa( {\bf x} )$ as long as $\kappa$ is bounded from below by $\kappa_m > 0$. Also any of the other parameters can be chosen spatially dependent as long as they are bounded from below by appropriate constants larger than zero.
\end{remark}
\subsection{Optimality}\label{sec:optimality}
Let us now look at the rate obtained in \eqref{firstrate}. It is clear that the best choice of $L$ is $L = \dfrac{\alpha^2}{\delta K_{dr}}$, giving the rate
\begin{equation}\label{rate2}
\mathrm{rate}(\delta)=\frac{\frac{\alpha^2}{K_{dr}}}{\frac{\alpha^2}{ K_{dr}}+\delta(\frac{2}{M}+\frac{2\tau\kappa}{C_\Omega^2}+(2-\delta)\frac{\alpha^2}{\beta})},
\end{equation}
where $\delta \in (0,2]$ is still free to be chosen.
Minimizing \eqref{rate2} corresponds to maximizing 
$$\delta\left(\frac{2}{M}+\frac{2\tau\kappa}{C_\Omega^2}+(2-\delta)\frac{\alpha^2}{\beta}\right).$$
Let $A:=\frac{2}{M}+\frac{2\tau\kappa}{C_\Omega^2}+2\frac{\alpha^2}{\beta}$ and $B:=\frac{\alpha^2}{\beta}$ and we can see that the maxima of $\delta(A-\delta B)$ is $\delta=\frac{A}{2B}$. Therefore the best choice of $\delta$ is
\begin{equation}
\delta = \min\left\{\frac{A}{2B}, 2\right\} \in [1, 2],
\end{equation}
because $A \geq 2B$. This implies, the optimal choice for L is 
\begin{equation}
\label{optimal-tuning-parameter}
L = \dfrac{\alpha^2}{K_{dr} \,  \min\left\{\dfrac{A}{2B}, 2 \right\}}  \in \left[\frac{L_{phys}}{2}, L_{phys}\right].
\end{equation}
We especially remark the two extreme situations:
\begin{enumerate}
 \item[1)] $M$ small, $\kappa \tau$ big $\Rightarrow  L = \dfrac{L_{phys}}{2}$ and
 \item[2)] $M$ big, $\kappa \tau$ small $\Rightarrow  L = L_{phys}$.
\end{enumerate}
This implies that one should choose $L = \dfrac{L_{phys}}{2}$ for e.g. highly-compressible fluids or for highly-permeable media or for very large time steps. In contrast, one should take the physical parameter $ L = L_{phys}$ for e.g. low-compressible and low-permeable porous media or very small time steps. These theoretical results will be verified by numerical experiments in the following section.

\begin{remark}[Consequence for low-compressible fluids and low-permeable porous media]\label{remark:convergence-inf-sup}
 Previous convergence results in the literature for the fixed-stress splitting scheme have not predicted or guaranteed any convergence in the limit case $M\rightarrow\infty$ and $\kappa\rightarrow 0$ (by fixed time step size $\tau$). However, by Theorem~\ref{theorem1}, for inf-sup stable discretizations, convergence of the fixed-stress splitting scheme is guaranteed, even in the limit case.
\end{remark}

\section{Numerical examples}\label{sec:numerics}
In this section we verify numerically the theoretical results of Theorem \ref{theorem1}. In particular we show that for constant material properties, the practical optimal value of $\delta$ increases for increasing permeability, $\kappa$, as the theory predicts.  We also emphasize that this does not hold for inf-sup unstable discretizations, e.g. P1-P1.

Three test cases are considered:
\begin{enumerate}
\item An experiment in the unit square domain with source terms giving parabolas as analytical solution to the continuous problem, \eqref{eq:mechanics}--\eqref{eq:flow}, for homogeneous Dirichlet boundary conditions.
\item An L-shaped domain with source terms from test case $1$.
\item Mandel's problem.
\end{enumerate}
For the first test case, additionally, we perform a deeper investigation on the robustness of the optimal tuning parameter with respect to external influences as initial guesses, boundary conditions etc.

We are using a MATLAB code for solving the problem in a two field formulation both in a P2-P1 stable discretization and in a P1-P1 not inf-sup stable discretization. The results have been verified with the DUNE \cite{dune} based code used in \cite{jakubAML}. 

In all the plots we consider several permeabilities, $\kappa$. For each of them we solve \eqref{eq:fs:1}--\eqref{eq:fs:2} with a range of stabilization parameters $L=\frac{\alpha^2}{\delta K_{dr}}$. This is visualized through plots showing total numbers of iterations in the y-axis and $\delta$ in the x-axis. The domain of $\delta$ is varying slightly over the different test cases, but always contains the interval $(1,2]$ which the theory predicts to contain the optimal value through subsection \ref{sec:optimality}. The stars in each plot denote the theoretically calculated optimal value of $\delta$. 

As stopping criterion we apply the relative errors in infinity-norm, $\frac{\|u_h^{n,i}-u_h^{n,i-1}\|_\infty}{\|u_h^{n,i}\|_\infty}<\epsilon_{u,r}$ and $\tfrac{\|p_h^{n,i}-p_h^{n,i-1}\|_\infty}{\|p_h^{n,i}\|_\infty}<\epsilon_{p,r}$ where $\epsilon_{u,r}$ and $\epsilon_{p,r}$ are defined separately for the different test cases. 

\begin{remark}[Choice of $K_{dr}$]\label{remark:test-case-dependent-kdr}
If one knows the drained bulk modulus, $K_{dr}$, choosing the optimal stabilization parameter should be possible. However, as already mentioned in Section~\ref{sec:analysis}, this is problem dependent; finding the correct one might not be trivial. For our computations, we choose $K_{dr}$ so that the theoretical optimal stabilization parameter is actually the practical optimal one for the smallest considered permeability. We experience that it also fits quite nicely for the remaining permeabilities for that particular setup. For all problems we set $\beta =K_{dr}$. However, we stress that this actually is not a realistic choice of $\beta$, which in reality is larger than $K_{dr}$.
\end{remark}

\subsection{Unit square domain}\label{section:numerical-results:parabola}

In this test case we consider two setups on a unit square domain. For the first setup we apply homogeneous Dirichlet boundary conditions and zero initial data for both displacement and pressure. We employ source terms corresponding to the analytical solution of the continuous problem
$$u_1(x,y,t)=u_2(x,y,t)=\tfrac{1}{p_\mathrm{ref}} p(x,y,t)=txy(1-x)(1-y),\qquad (x,y)\in(0,1)^2,\ t\in(0,0.1),$$
regardless of permeability, Lam\'e parameters and the Biot-Willis constant, see Table~\ref{tab:CoefficientsCase1}. The pressure, $p$, is scaled by $p_\mathrm{ref}=10^{11}$ in order to balance  the orders of magnitude of the mechanical and fluid stresses for the chosen physical parameters. In the second setup we keep the initial data and source terms from the first setup while assigning homogeneous Dirichlet boundary conditions for the displacement everywhere but at the top, $\Gamma_N=(0,1)\times\{1\}$, where homogeneous natural boundary conditions are applied. For the pressure homogeneous boundary conditions are applied on the entire boundary.  For both setups, we compute one single time step from $0$ to $0.1$, and discretize the domain using a regular triangular mesh with mesh size $h=1/8$. Numerical tests have showed that multiple time steps and different mesh diameters yield similar performance results. The tolerances $\epsilon_{u,r}$ and $\epsilon_{p,r}$ are set to $10^{-12}$. Solutions for both setups are plotted for $\kappa=10^{-10}$ in Figure \ref{fig:SolTestCase1}.
To summarize, we have

\begin{itemize}
\item Setup 1: Homogeneous Dirichlet data on the entire boundary for displacement and pressure.
\item Setup 2: Homogeneous Dirichlet data for the pressure. Homogeneous Neumann data on top in the mechanics equation, homogeneous Dirichlet data everywhere else for the displacement.
\end{itemize}

The drained bulk modulus is set to $K_{dr}=1.6\mu+\lambda$ for setup 1 and $K_{dr} = 1.1\mu+\lambda$ for setup 2.

\begin{table}[h]
\begin{center}
\begin{tabular}{| l | c | r |}
\hline
Symbol & Name & Value\\
\hline
$\lambda$ & Lam\'e parameter 1 & $27.778\cdot 10^9$  \\
$\mu$ & Lam\'e parameter 2 &$41.667\cdot 10^9 $ \\
$\kappa$ & Permeability & $10^{-15}, 10^{-14}, ..., 10^{-10} $\\
M & Compressibility coefficient & $10^{11}$ \\
$\alpha$ & Biot-Willis constant & $1$ \\
$u_0$, $p_0$ & Initial data & 0\\
$h$ & Mesh diameter & $\frac{1}{8}$\\
$\tau$ & Time step size &  $0.1$ \\
$t_0$ & Initial time & $0$\\
$T$ & Final time & $0.1$\\
$\epsilon_{u,r}$ and $\epsilon_{p,r}$ & Tolerances & $10^{-12}$\\

\hline
\end{tabular}
\caption{Coefficients for test cases 1 and 2}
\label{tab:CoefficientsCase1}
\end{center}
\end{table}
\begin{figure}[h!]
\begin{subfigure}[b]{0.49\textwidth}
\includegraphics[width=\textwidth]{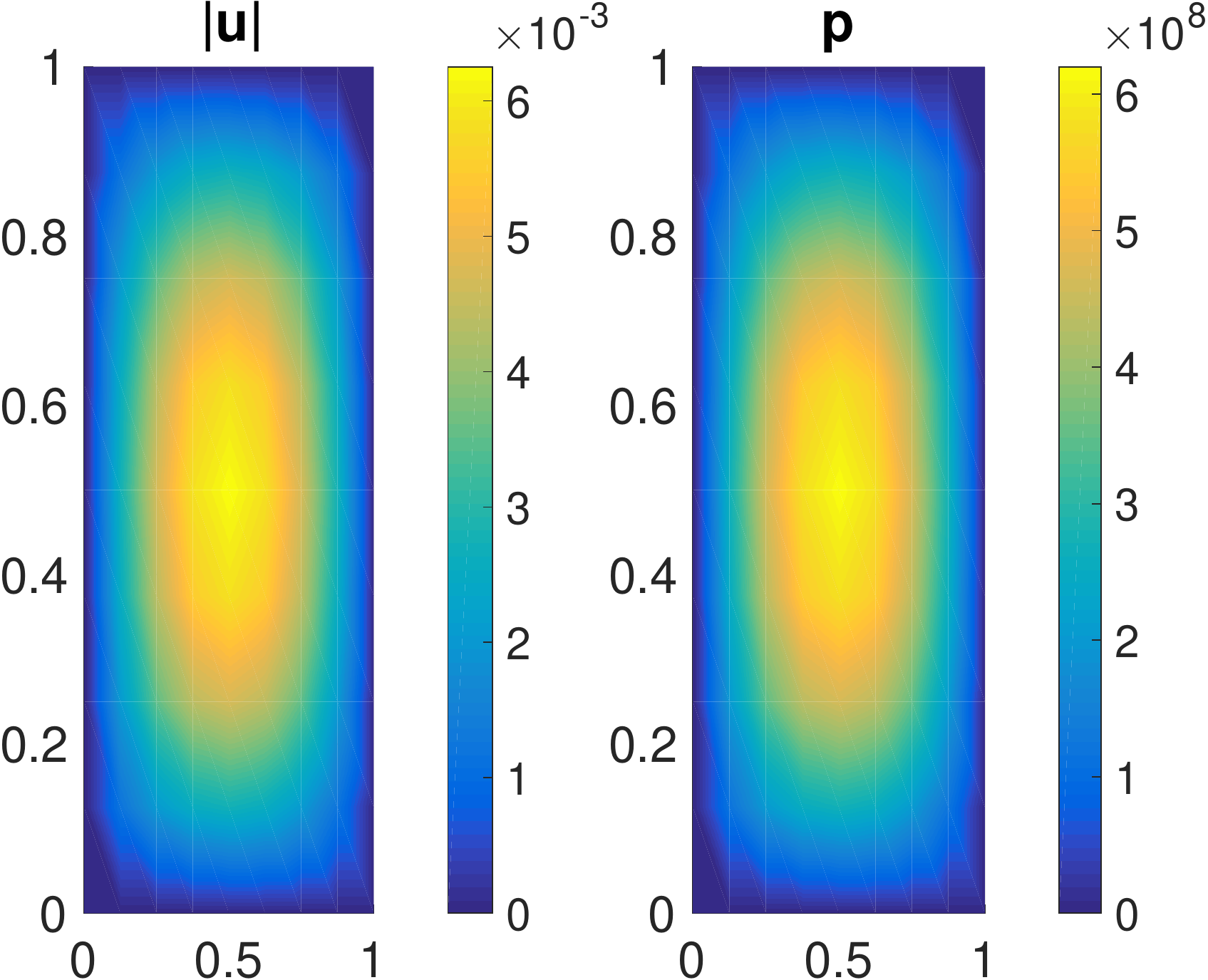}
\caption{Setup 1}
\label{fig:SolSetup1}
\end{subfigure}
\begin{subfigure}[b]{0.49\textwidth}
\includegraphics[width=\textwidth]{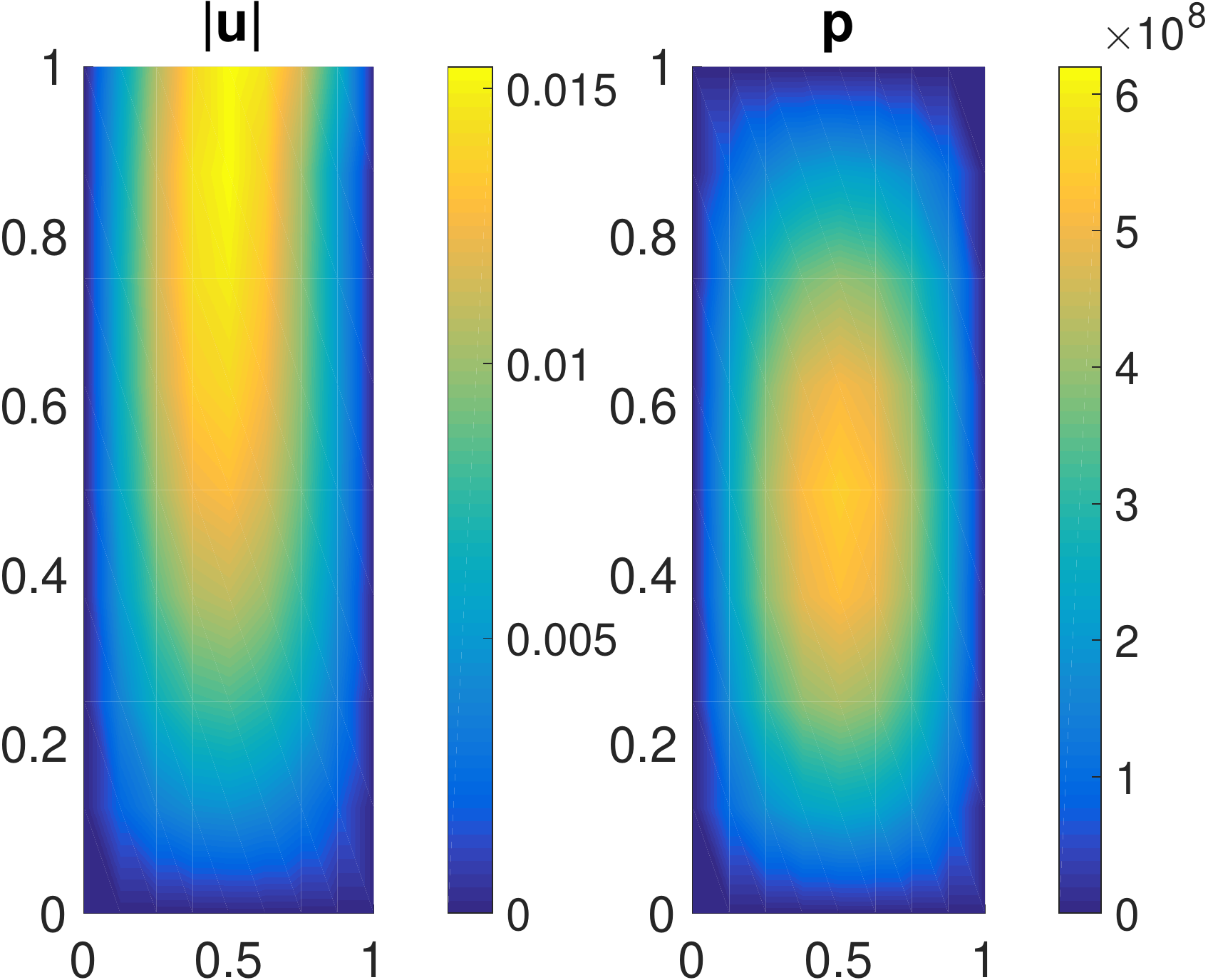}
\caption{Setup 2}
\label{fig:SolSetup2}
\end{subfigure}
\caption{Displacement (Left) and Pressure (Right) for test case 1 with $\kappa =10^{-10}$. Remark that the colors are scaled differently for the two displacements.}
\label{fig:SolTestCase1}
\end{figure}
\begin{figure}[h!]
\begin{subfigure}[b]{0.5\textwidth}
\includegraphics[width=\textwidth]{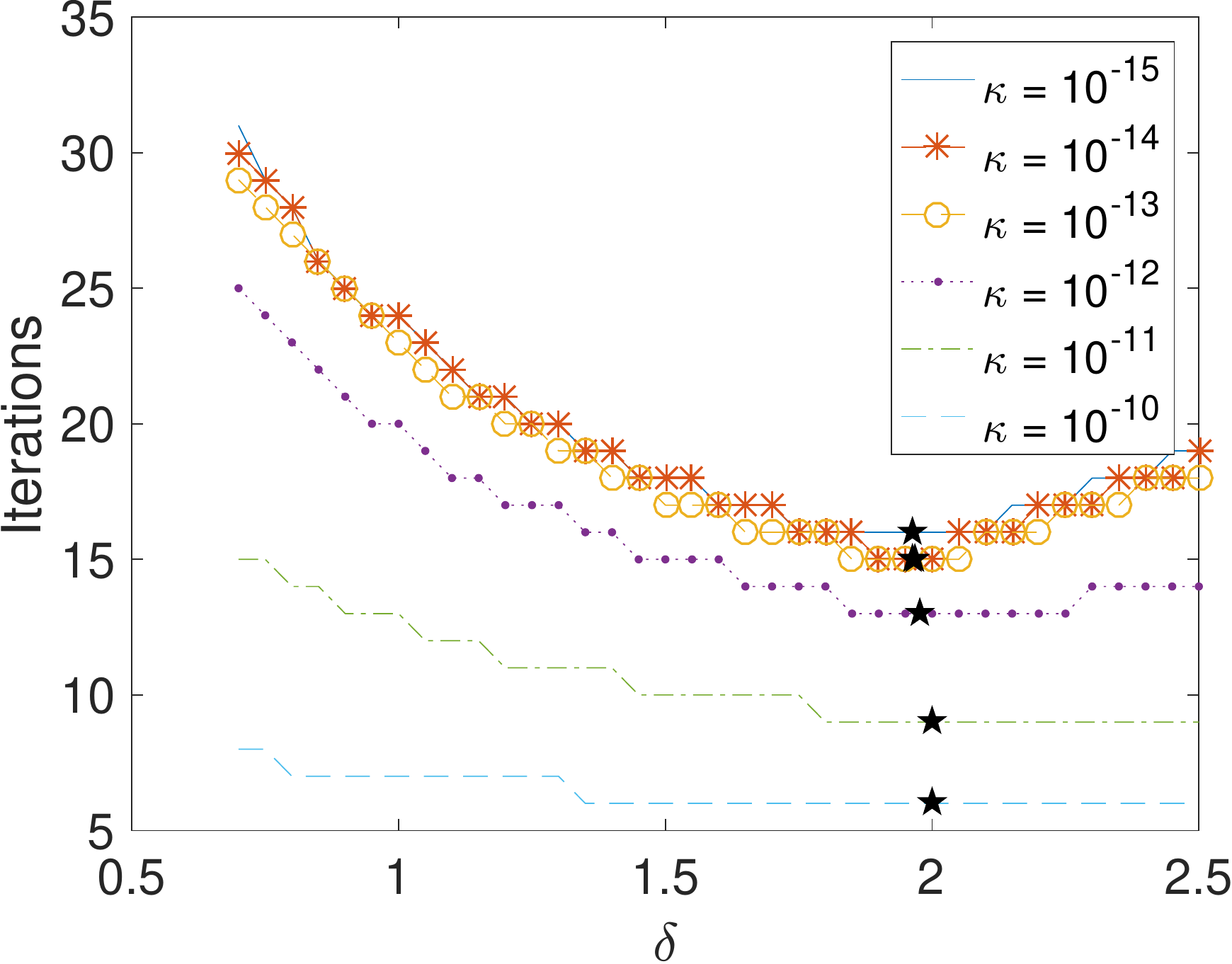}
\caption{P2-P1 discretization}
\label{fig:Setup1P2}
\end{subfigure}
\begin{subfigure}[b]{0.5\textwidth}
\includegraphics[width=\textwidth]{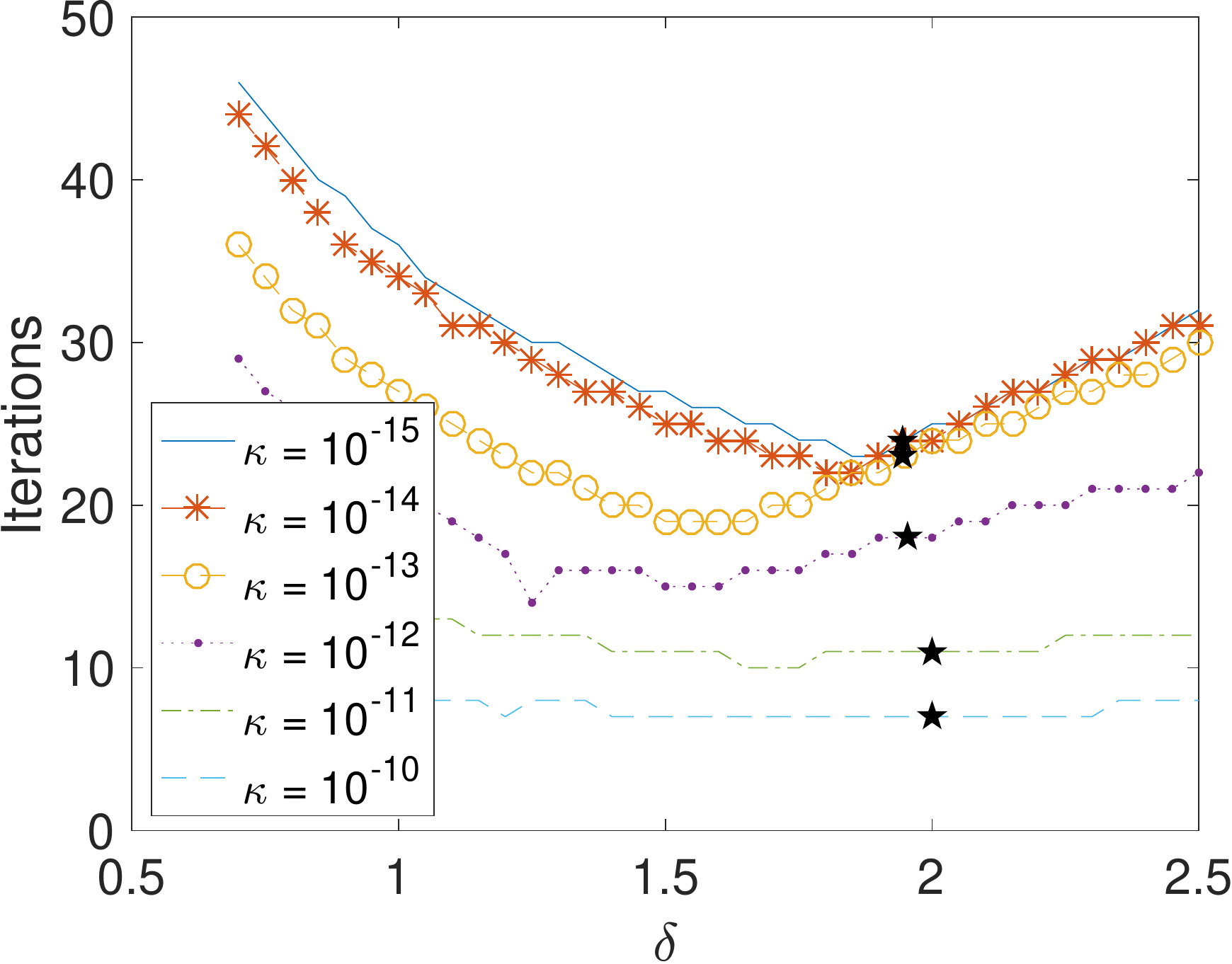}
\caption{P1-P1 discretization}
\label{fig:Setup1P1}
\end{subfigure}
\label{fig:setup1}
\caption{Test case 1 (Unit square domain): Setup 1, 
total iteration count for one time step applying stabilization parameter $L=\frac{\alpha^2}{\delta K_{dr}}$ with $K_{dr} = 
1.6\mu+\lambda$. The star represents the theoretically calculated optimal $\delta$. }
\end{figure}
\begin{figure}[h!]
\begin{subfigure}[b]{0.49\textwidth}
\includegraphics[width=\textwidth]{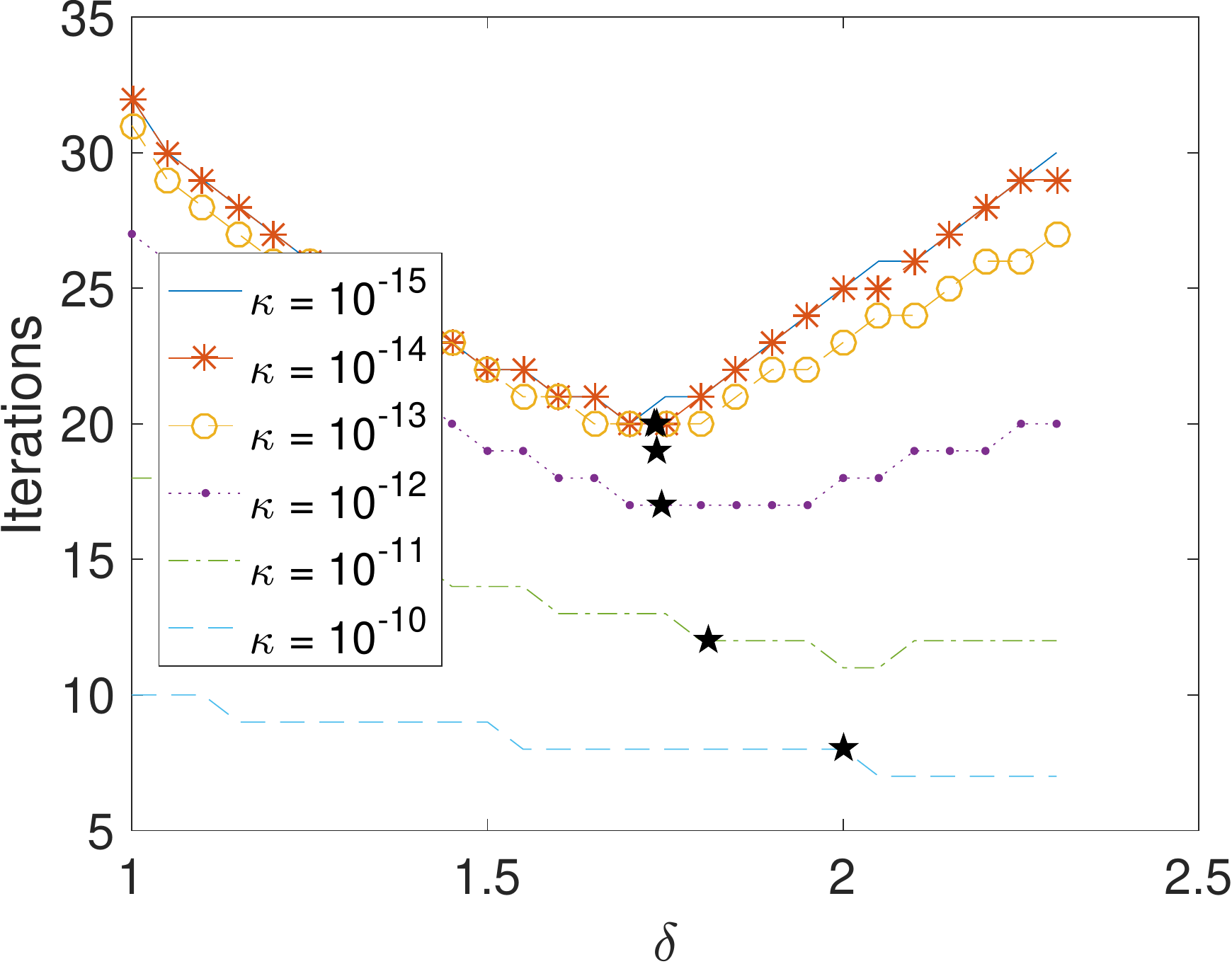}
\caption{P2-P1 discretization}
\label{fig:Setup2P2}
\end{subfigure}
\begin{subfigure}[b]{0.49\textwidth}
\includegraphics[width=\textwidth]{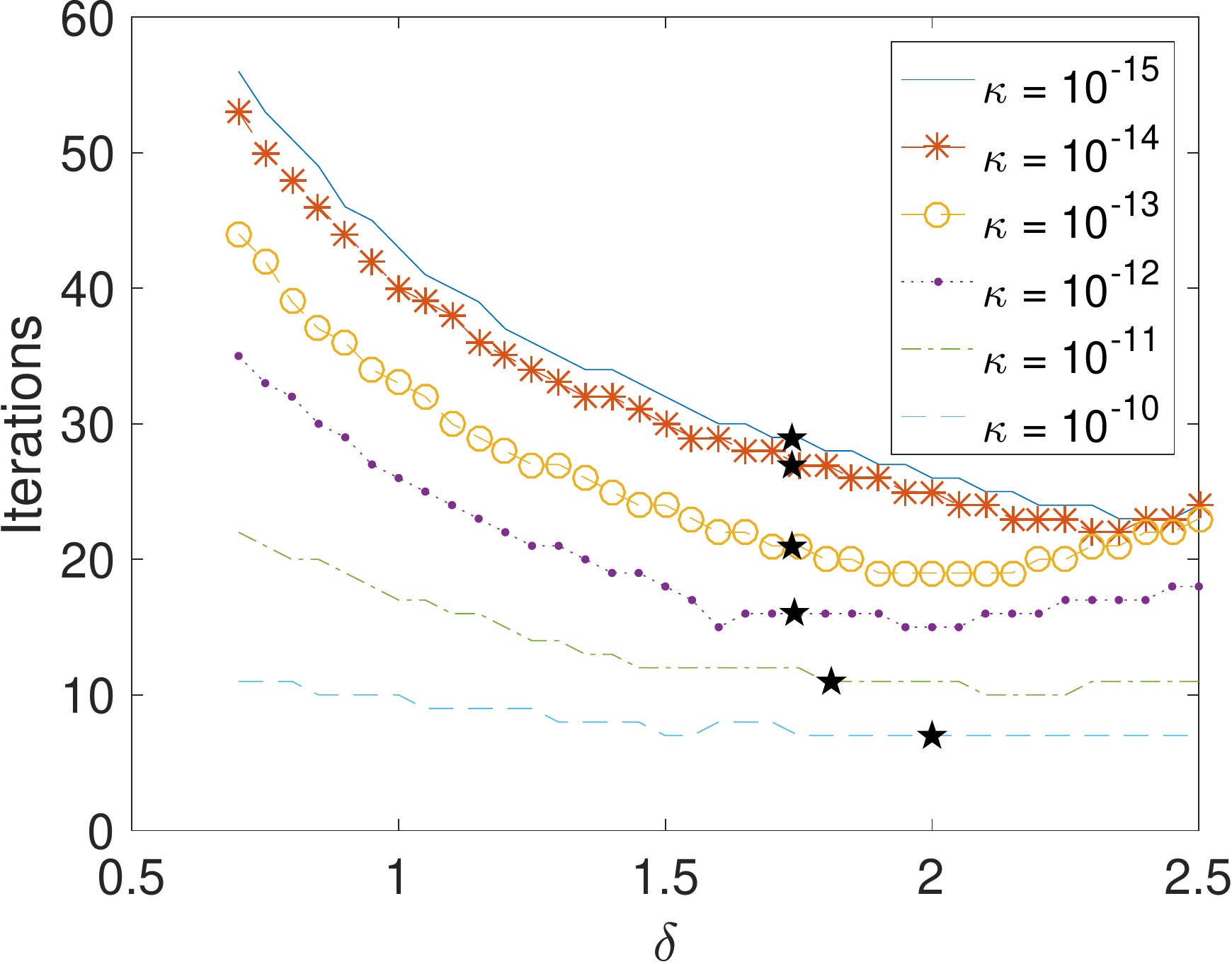}
\caption{P1-P1 discretization}
\label{fig:Setup2P1}
\end{subfigure}
\label{fig:setup2}
\caption{Test case 1 (Unit square domain): Setup 2, total iteration count for one time step applying stabilization parameter $L=\frac{\alpha^2}{\delta K_{dr}}$ with $K_{dr}=1.1\mu+\lambda$. The star represents the theoretically calculated optimal $\delta$.}
\end{figure}

We experience in the inf-sup stable discretizations, Figure \ref{fig:Setup1P2} and \ref{fig:Setup2P2}, that as $\kappa$ increases so does the optimal $\delta$ which is in accordance with Theorem \ref{theorem1}. However, when we have a not inf-sup stable discretization, Figure \ref{fig:Setup1P1} and \ref{fig:Setup2P1}, the behavior does not follow the same trend. In particular, we observe that for the three smallest permeability values, $\kappa = 10^{-15}$, $  \kappa=10^{-14}$ and $\kappa = 10^{-13}$, the optimal stabilization parameter is moving in the opposite direction to the theoretically calculated one as the stability of the discretization is lost.

\subsubsection{Robustness of the optimality of the tuning parameter}\label{section:random-parabola}

As already mentioned in remark~\ref{remark:test-case-dependent-kdr}, the
value for $K_{dr}$ depends on the particular test case; this directly
translates to the optimal choice of the tuning parameter
$L=\tfrac{\alpha^2}{\delta K_{dr}}$. We have experience this in
section~\ref{section:numerical-results:parabola}. Just changing the
distribution of the Dirichlet and Neumann boundaries has resulted in
quite different choices for $K_{dr}$. In the following, we analyze the
robustness of the optimal tuning parameter with respect to varying
numerical and physical data. For this, we revisit the test case from
section~\ref{section:numerical-results:parabola} and limit the
discussion to setup 1 and the P2-P1 discretization. We keep the setting
as before except for modifying single components at a time. For this, we
do not take fixed but random (uniformly distributed values of order
$\mathcal{O}(\mathrm{order})$, with $\mathrm{order}$ specified below):
\begin{enumerate}
  \item[(M1)] initial guesses $\bm{u}_h^{n,0}=\mathcal{O}(1)$,
$p_h^{n,0}=\mathcal{O}(p_{ref})$,
  \item[(M2)] initial data $\bm{u}_h^0=\mathcal{O}(1)$,
$p_h^0=\mathcal{O}(p_{ref})$,
  \item[(M3)] Dirichlet boundary conditions for
$\bm{u}_h=\mathcal{O}(1)$, $p_h=\mathcal{O}(p_{ref})$,
  \item[(M4)] source terms $\bm{f}^n=\mathcal{O}(p_{ref}),\
S_f^n=\mathcal{O}(1)$.
\end{enumerate}

As before, for a fixed scenario, we consider different values for
$\delta\in[1,2.5]$ and $\kappa\in[10^{-15},10^{-10}]$. We repeat each of
the modifications~(M1)--(M4) for 20 random scenarios and take the
average of the number of iterations in the end. The results are
displayed in Figures~\ref{figure:performance-r1-r2}
and~\ref{figure:performance-r3-r4}. In order to assess the robustness of
the tuning parameter, we additionally mark the location of the
computable, optimal $\delta$ based on $K_{dr}=1.6\mu+\lambda$ and
$\beta=K_{dr}$; the identical value as in test case 1, setup 1. We
observe that despite all random variations, for all modifications, the
performance of the splitting scheme remains robust. Indeed, for each
permeability value, the optimal tuning parameter remains almost the same. Hence, our results confirm Theorem 1, independently of choice of initial and boundary data, or source terms.

\begin{figure}[h!]
\begin{subfigure}[b]{0.48\textwidth}
\includegraphics[width=\textwidth]{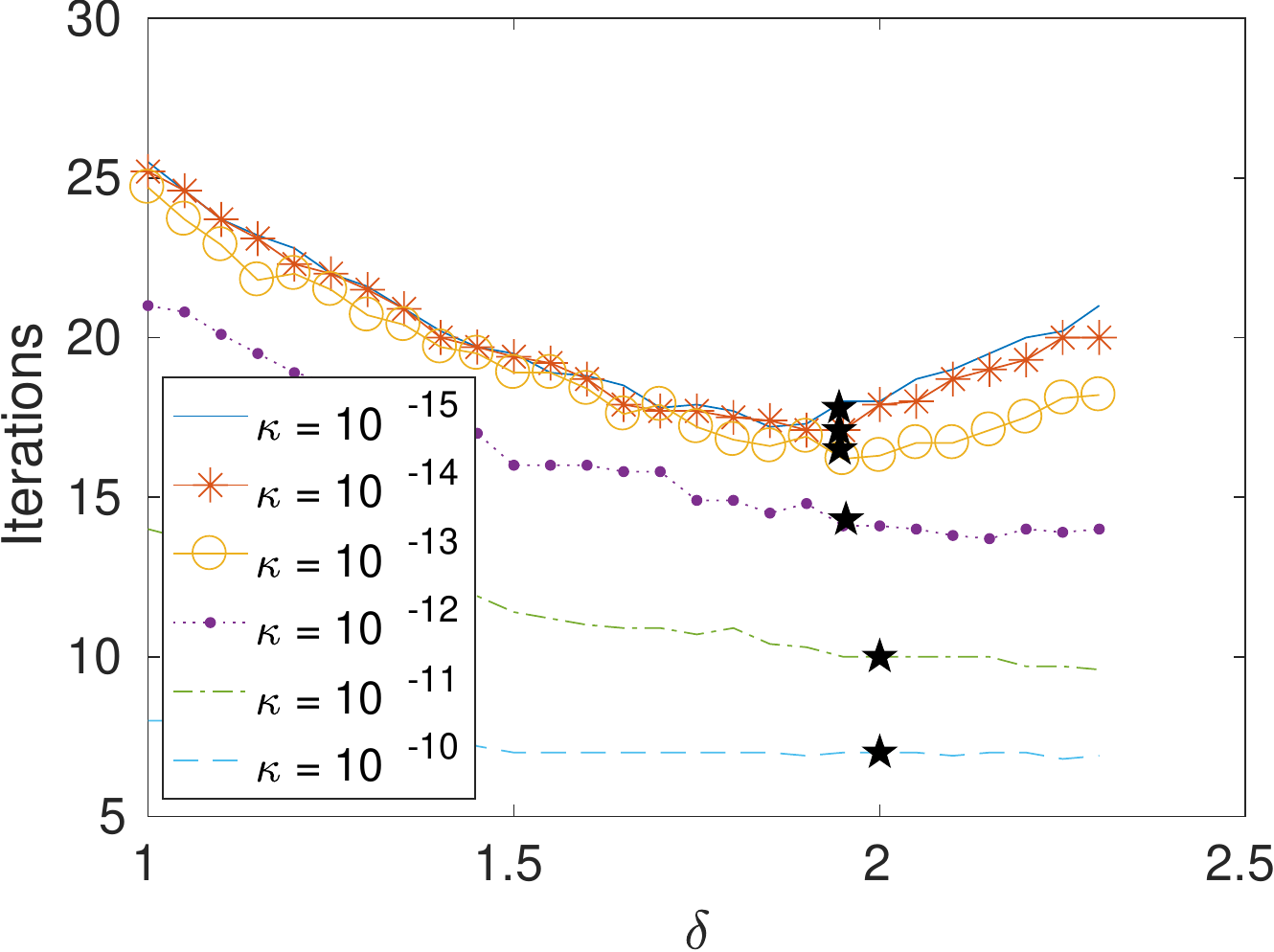}
\caption{(M1) Random initial guess}
\label{figure:performance-random-initial-guess}
\end{subfigure}
\begin{subfigure}[b]{0.48\textwidth}
\includegraphics[width=\textwidth]{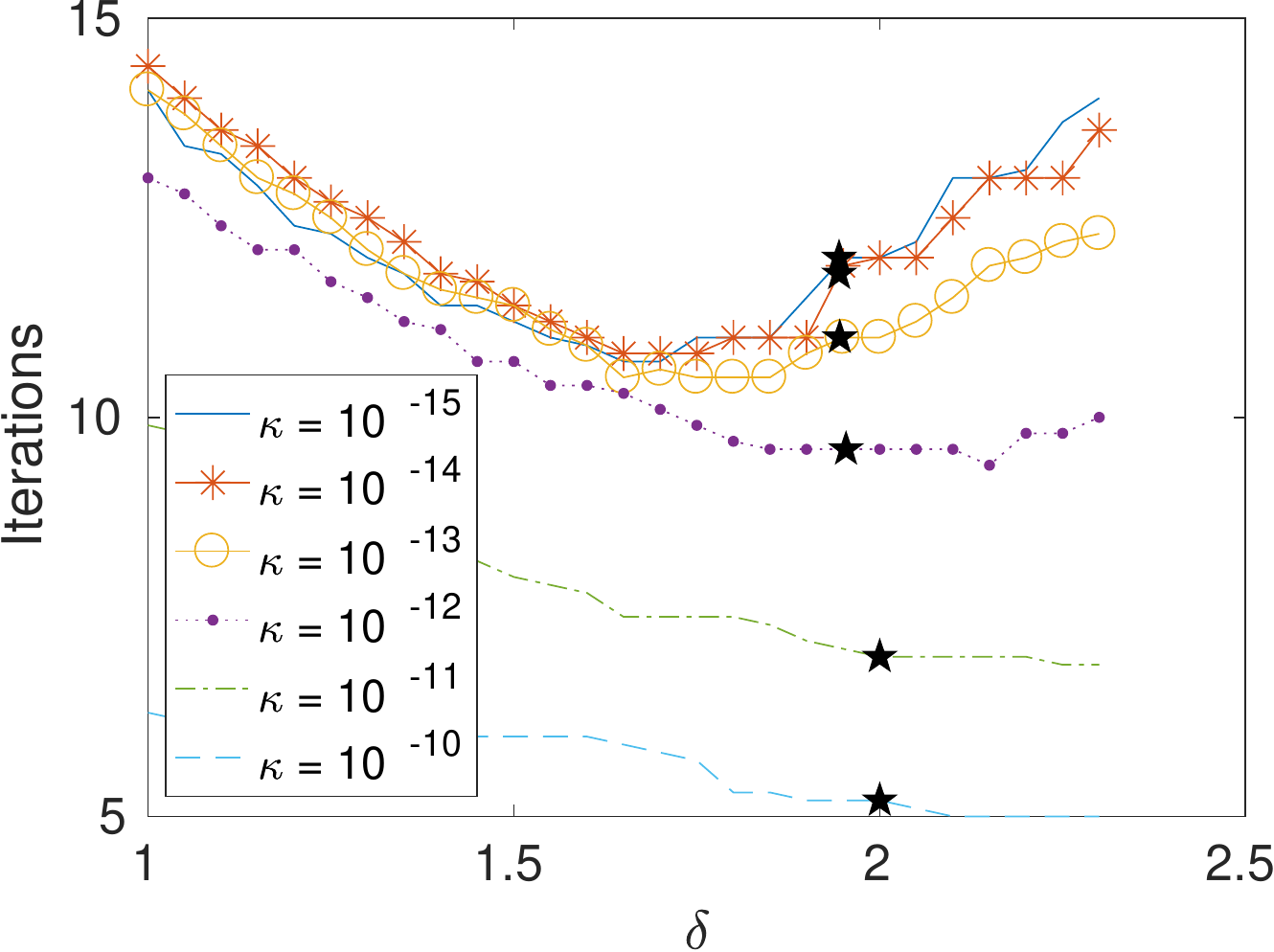}
\caption{(M2) Random initial data}
\label{figure:performance-random-initial-data}
\end{subfigure}
\caption{\label{figure:performance-r1-r2} Test case 1; random data, setup 1, P2-P1-discretization. Total number of iterations is averaged over 20 realizations. The star represents the theoretically calculated optimal $\delta$.}
\end{figure}

\begin{figure}[h!]
\begin{subfigure}[b]{0.48\textwidth}
\includegraphics[width=\textwidth]{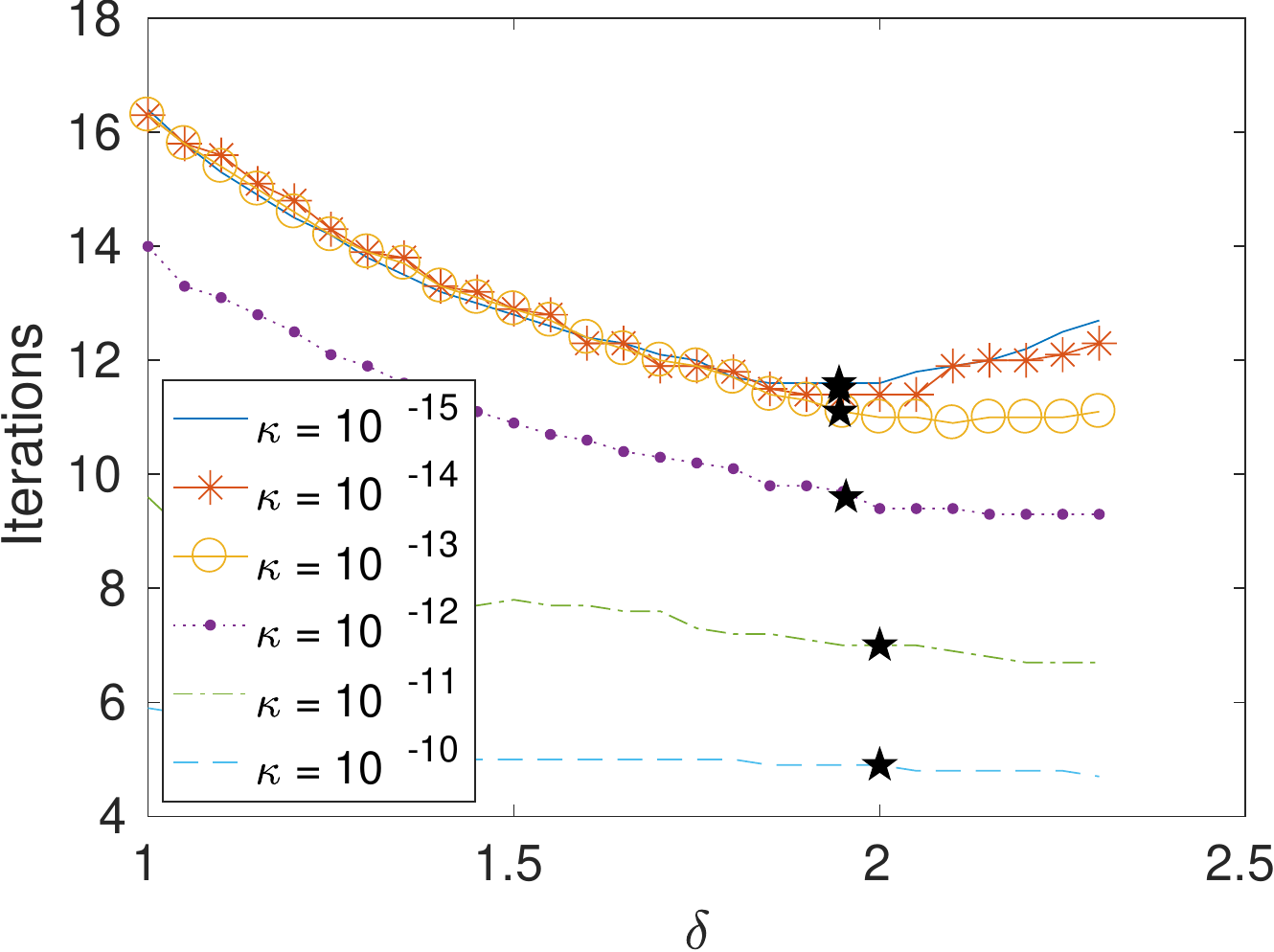}
\caption{(M3) Random Dirichlet data}
\label{figure:performance-random-dirichlet-data}
\end{subfigure}
\begin{subfigure}[b]{0.48\textwidth}
\includegraphics[width=\textwidth]{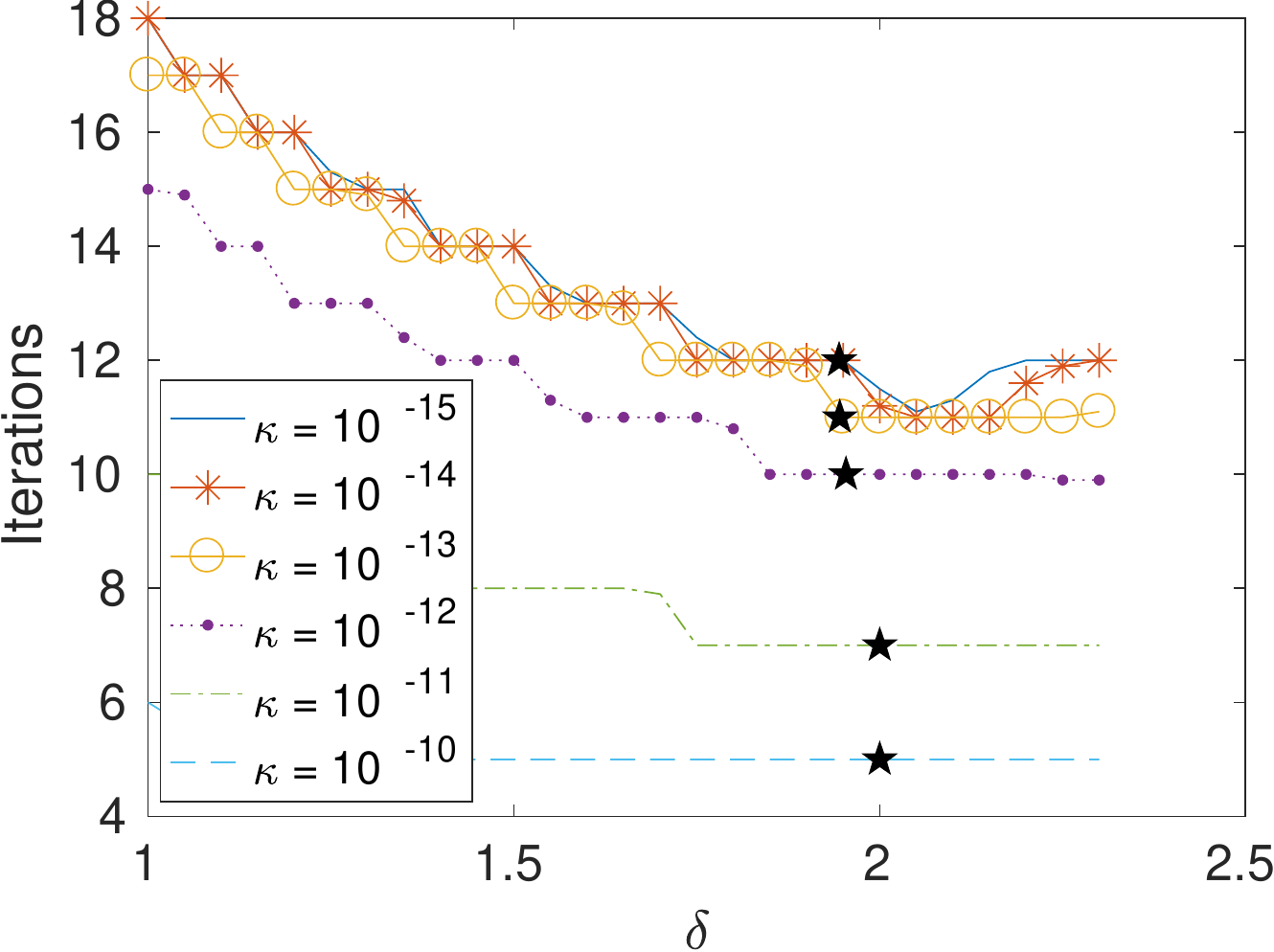}
\caption{(M4) Random source terms}
\label{figure:performance-random-source-terms}
\end{subfigure}
\caption{\label{figure:performance-r3-r4} Test case 1; random data, setup 1, P2-P1-discretization. Total number of iterations is averaged over 20 realizations. The star represents the theoretically calculated optimal $\delta$.}
\end{figure}

In the spirit of the difference of setup 1 and setup 2 in section~\ref{section:numerical-results:parabola}, we consider one more modification: A random
\begin{itemize}
 \item[(M5)] distribution of Dirichlet and Neumann boundary for $\bm{u}_h$ and $p_h$ with homogeneous data on both boundaries.
\end{itemize}
We use two different values for $K_{dr}$; (i) $K_{dr}=1.6\mu+\lambda$, as before, and (ii) $K_{dr}=\mu+\lambda$, the most pessimistic choice, suitable for two-dimensional situations.  The corresponding performance of the splitting scheme against $\delta$ is displayed in Figure~\ref{figure:performance-r5}. As expected from the observations for both setups for test case 1,
the choices $K_{dr}=1.6\mu+\lambda$, $\beta=K_{dr}$ do not yield an optimal tuning parameter using~\eqref{optimal-tuning-parameter}. However, using the worst-case choice $K_{dr}=\mu+\lambda$, $\beta=K_{dr}$, one obtains in average an acceptable match of the computable optimal values for $\delta$ and the practical optima.

\begin{figure}[h!]
\begin{subfigure}[b]{0.498\textwidth}
\includegraphics[width=\textwidth]{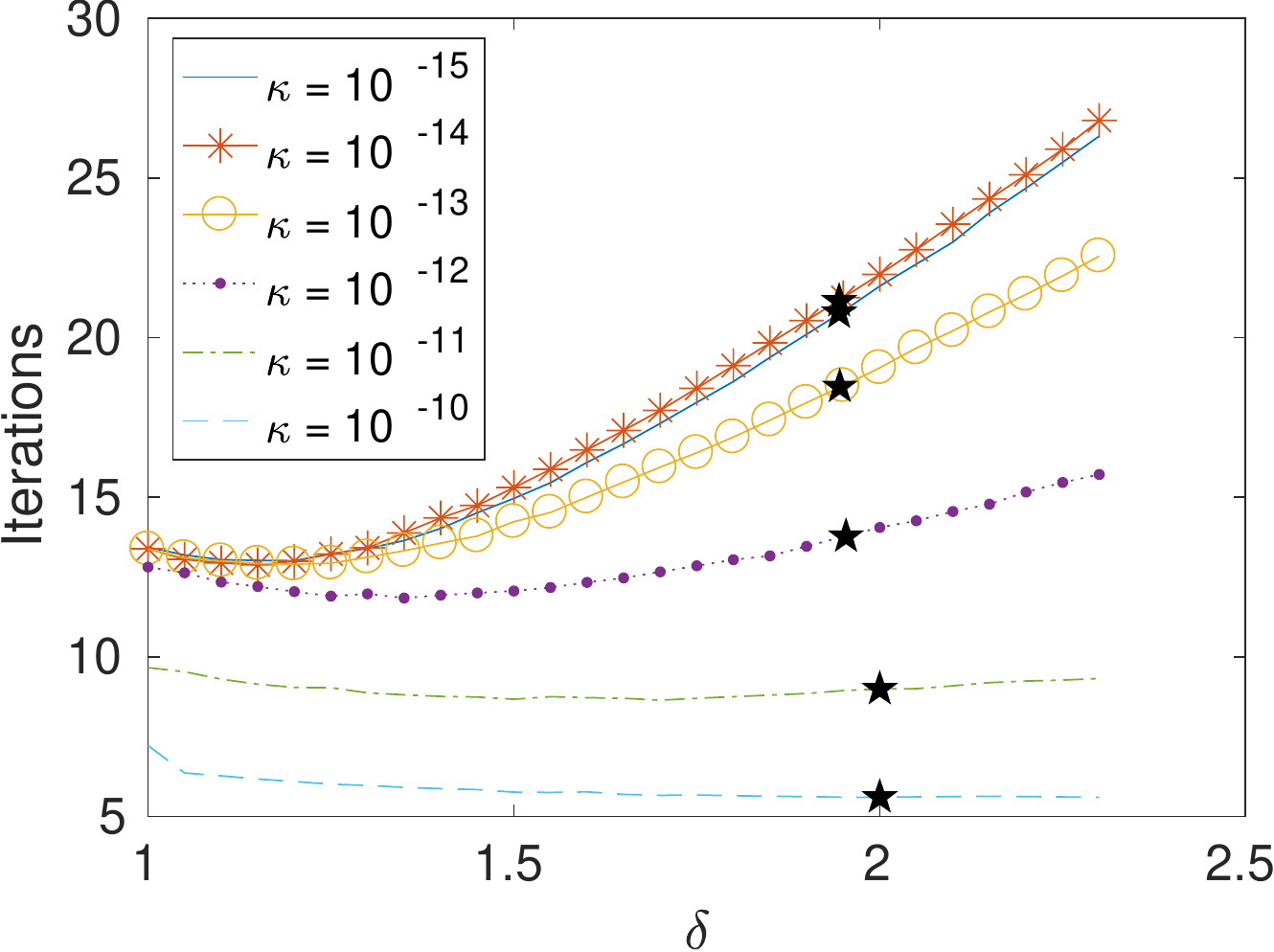}
\caption{(M5) Rand.\ Dirichlet boundary, $K_{dr}=1.6\mu+\lambda$}
\label{figure:performance-random-dirichlet-boundary-1}
\end{subfigure}
\begin{subfigure}[b]{0.48\textwidth}
\includegraphics[width=\textwidth]{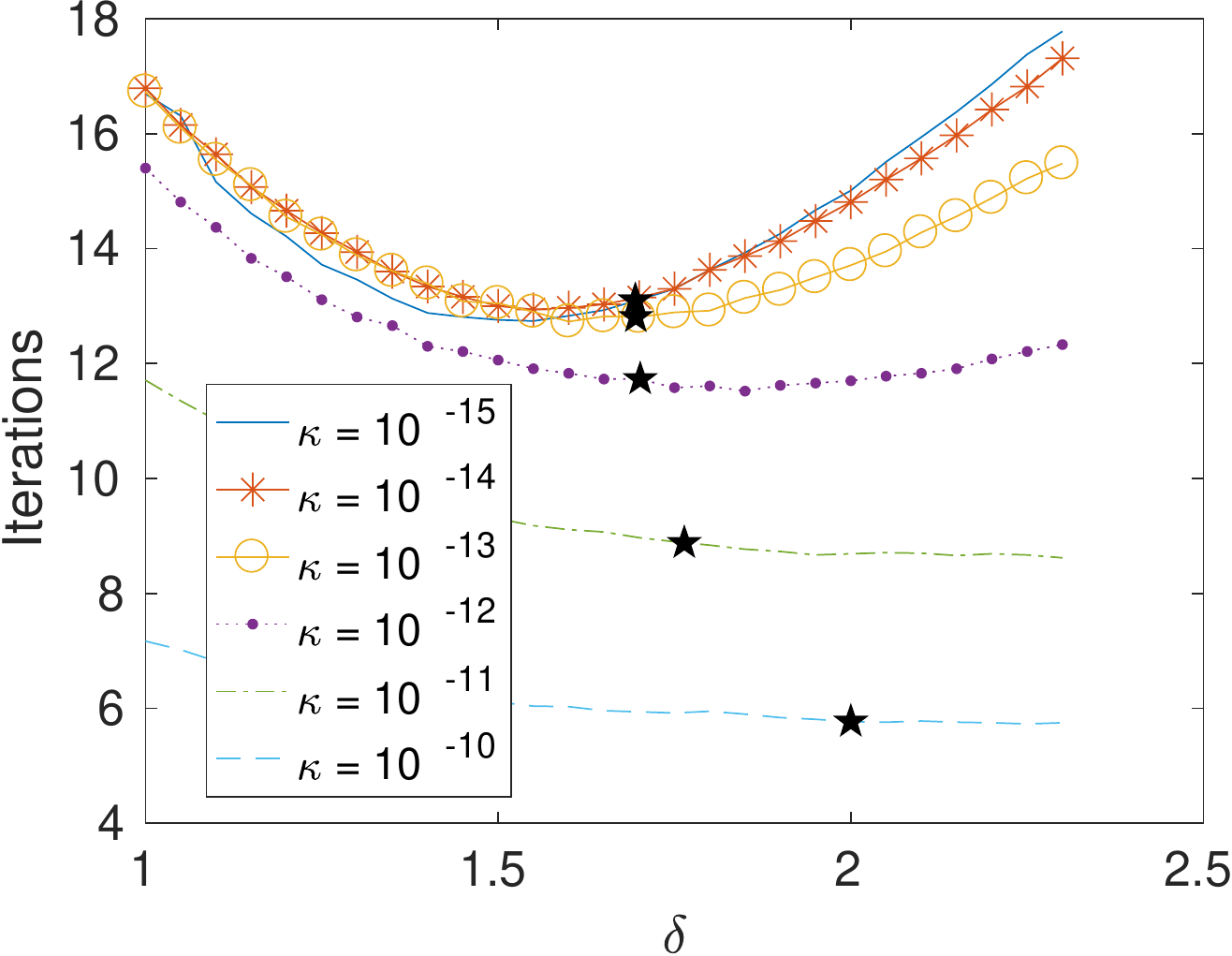}
\caption{(M5) Rand.\ Dirichlet boundary, $K_{dr}=\mu+\lambda$}
\label{figure:performance-random-dirichlet-boundary-2}
\end{subfigure}
\caption{\label{figure:performance-r5} Test case 1; random data, setup 1, P2-P1-discretization. Total number of iterations is averaged over 20 realizations. The star represents the theoretically calculated optimal $\delta$.}
\end{figure}

Finally, we conclude, that given the optimal $K_{dr}$ for a given scenario, it is suitable to use the same $K_{dr}$ for a different scenario as long as one does not change the distribution of boundary conditions. In this case, one has to either find the optimal choice or use the worst case but safe choice $K_{dr}=\tfrac{2\mu}{d}+\lambda$.

\subsection{L-shaped domain}
For this test case we consider an L-shaped domain with edges, $\Gamma_1=\left\{0\right\}\times\left[0,1\right]$, $\Gamma_2=\left[0,1\right]\times\left\{0\right\}$, $\Gamma_3=\left\{1\right\}\times\left[0,0.5\right]$, $\Gamma_4=\left[0.5,1\right]\times\left\{0.5\right\}$, $\Gamma_5=\left\{0.5\right\}\times\left[0.5,1\right]$ and $\Gamma_6=\left(0,0.5\right)\times\left\{1\right\}$.
We are considering the same source terms and apply the same parameters, spatial and temporal discretization, initial data and stopping criterion as in test case 1, see Table \ref{tab:CoefficientsCase1}. Similar to setup 2 above, for the pressure homogeneous Dirichlet boundary conditions are applied on the entire boundary, and for the displacement, homogeneous Dirichlet boundary conditions are considered everywhere except at the top, $\Gamma_{6}$. On the top, we apply zero Neumann boundary conditions in the mechanics equation, \eqref{eq:mechanics}. The solution for $\kappa = 10^{-10}$ is displayed in Figure \ref{fig:SolTest2}. 
For the computations, we set $K_{dr} = 1.4\mu+\lambda$. 
\begin{figure}[h!]
\begin{subfigure}[b]{0.5\textwidth}
\includegraphics[width=\textwidth]{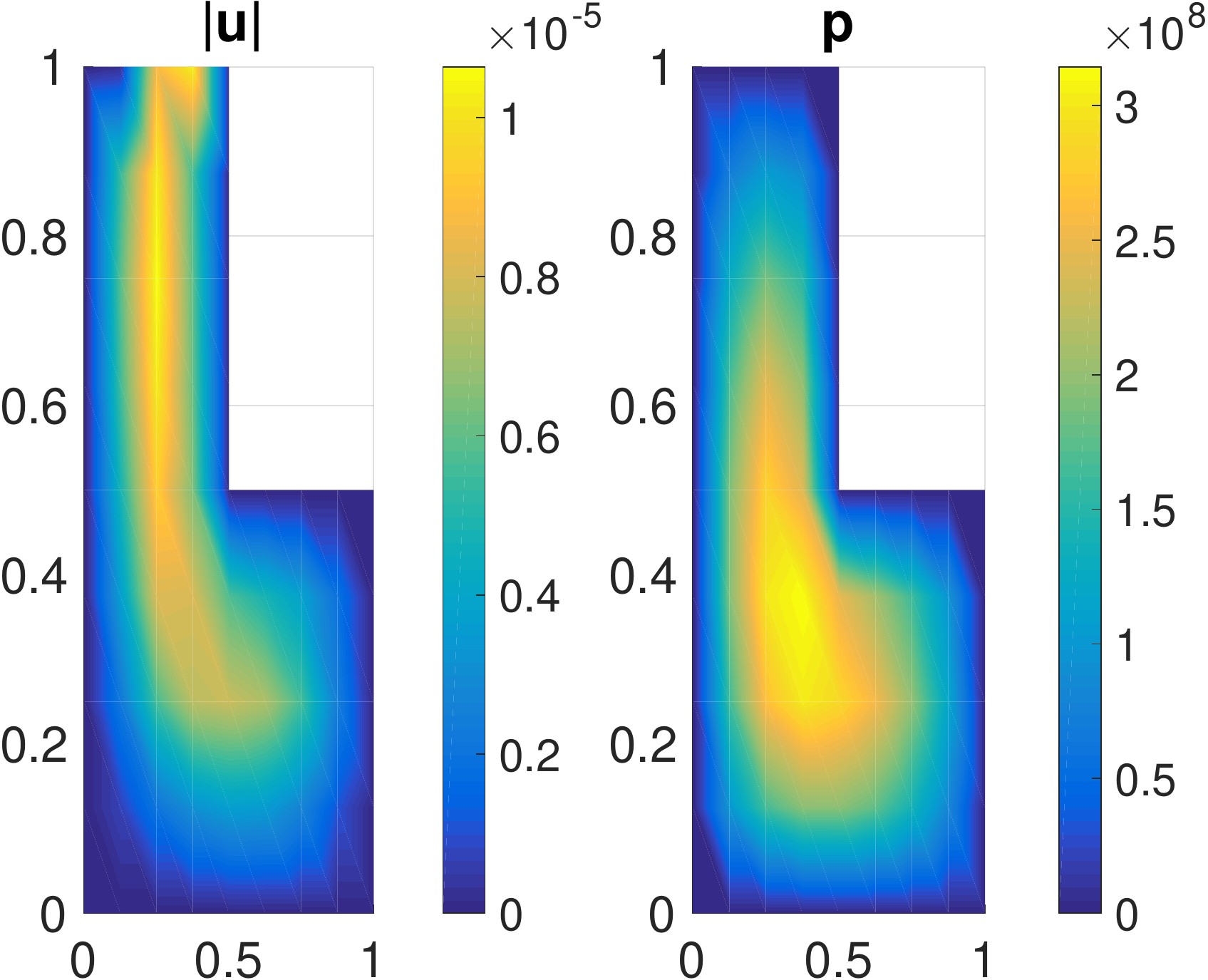}
\caption{Test case 2, L-shaped domain}
\label{fig:SolTest2}
\end{subfigure}
\begin{subfigure}[b]{0.485\textwidth}
\includegraphics[width=\textwidth]{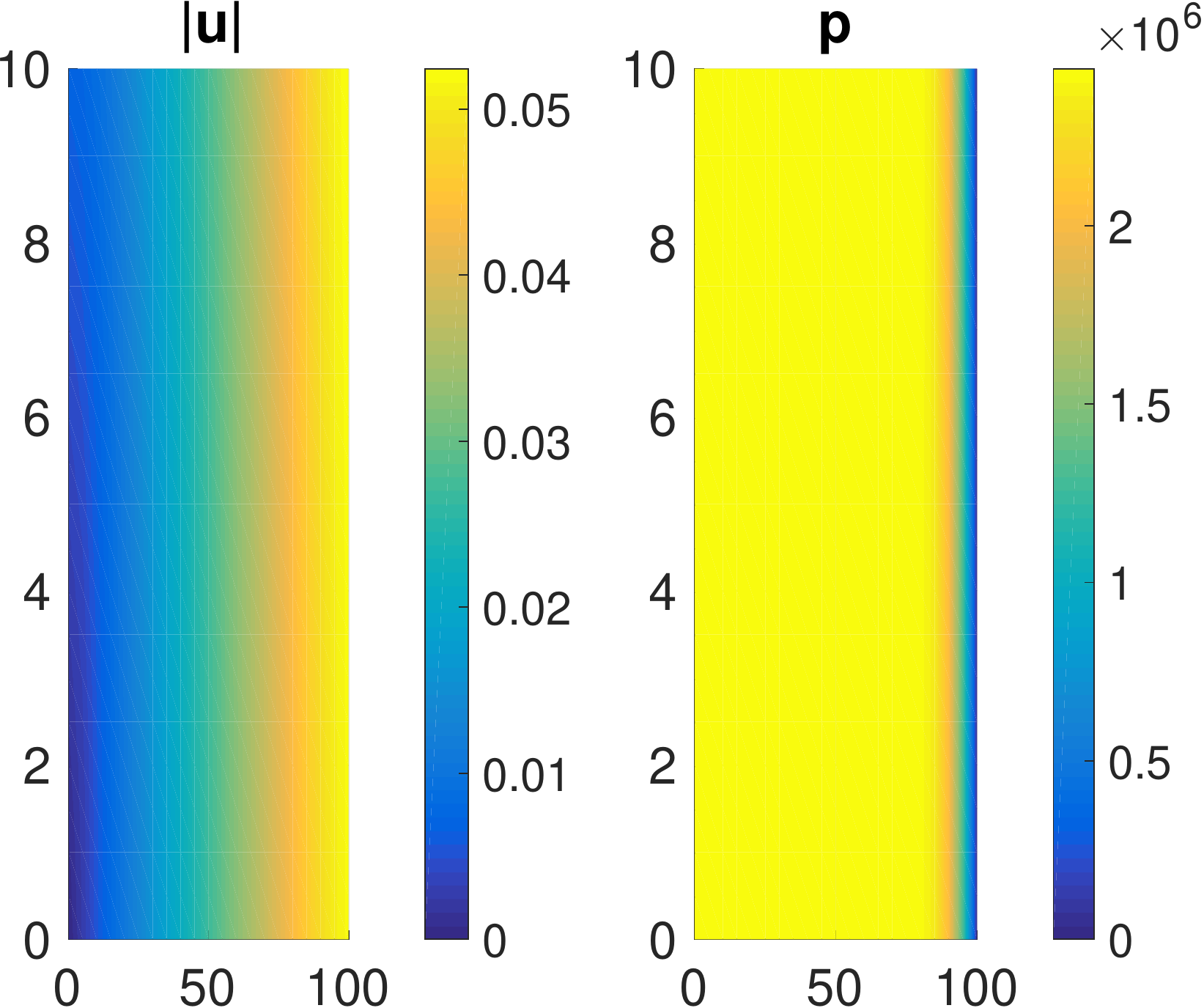}
\caption{Test case 3, Mandel's problem}
\label{fig:SolTest3}
\end{subfigure}
\label{fig:SolTestCase23}
\caption{Displacement (Left) and Pressure (Right) for test case 2 and 3. Remark that the colors are scaled differently.}
\end{figure}
\begin{figure}[h!]
\begin{subfigure}[b]{0.49\textwidth}
\includegraphics[width=\textwidth]{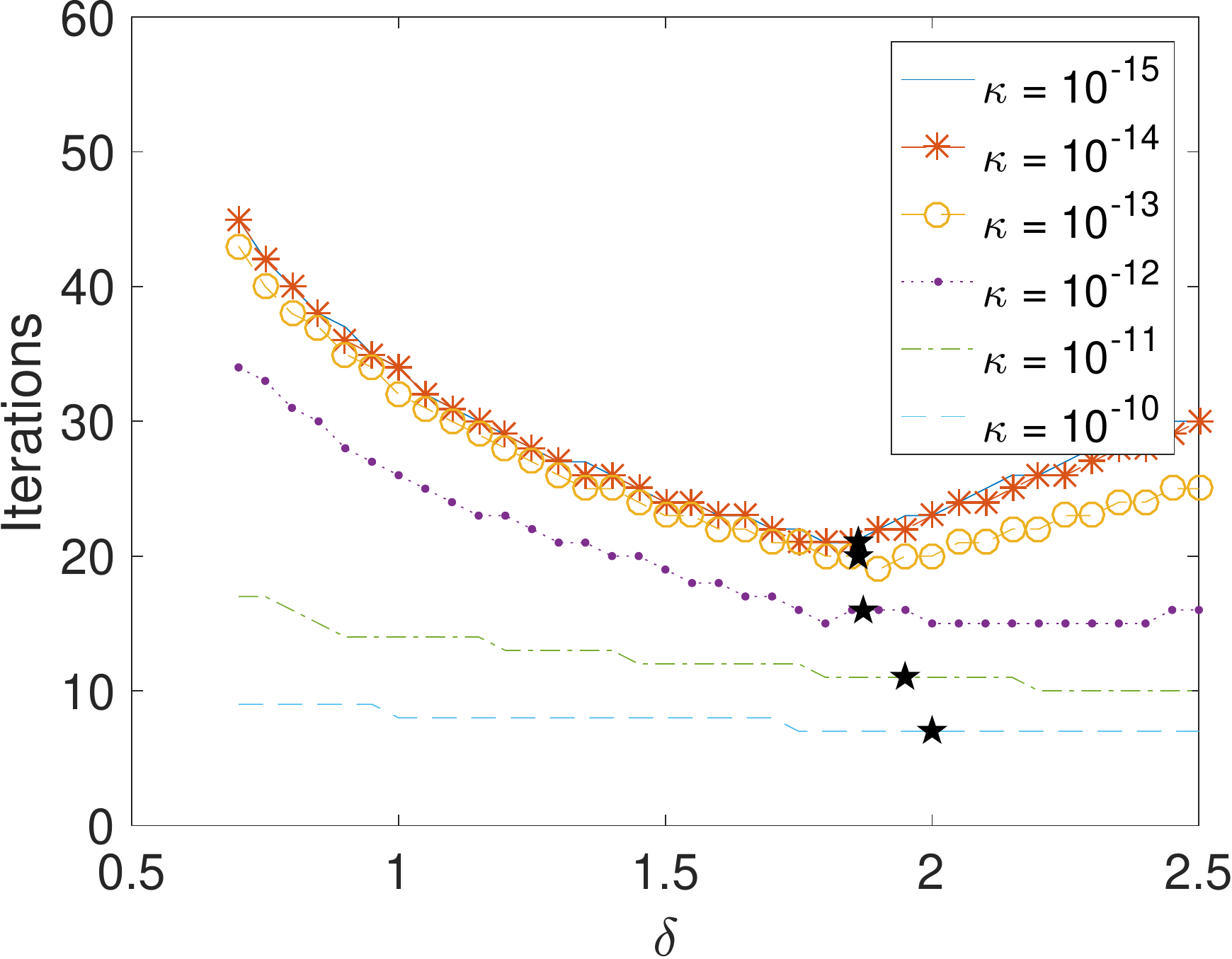}
\caption{P2-P1 discretization}
\label{fig:Test2P2}
\end{subfigure}
\begin{subfigure}[b]{0.49\textwidth}
\includegraphics[width=\textwidth]{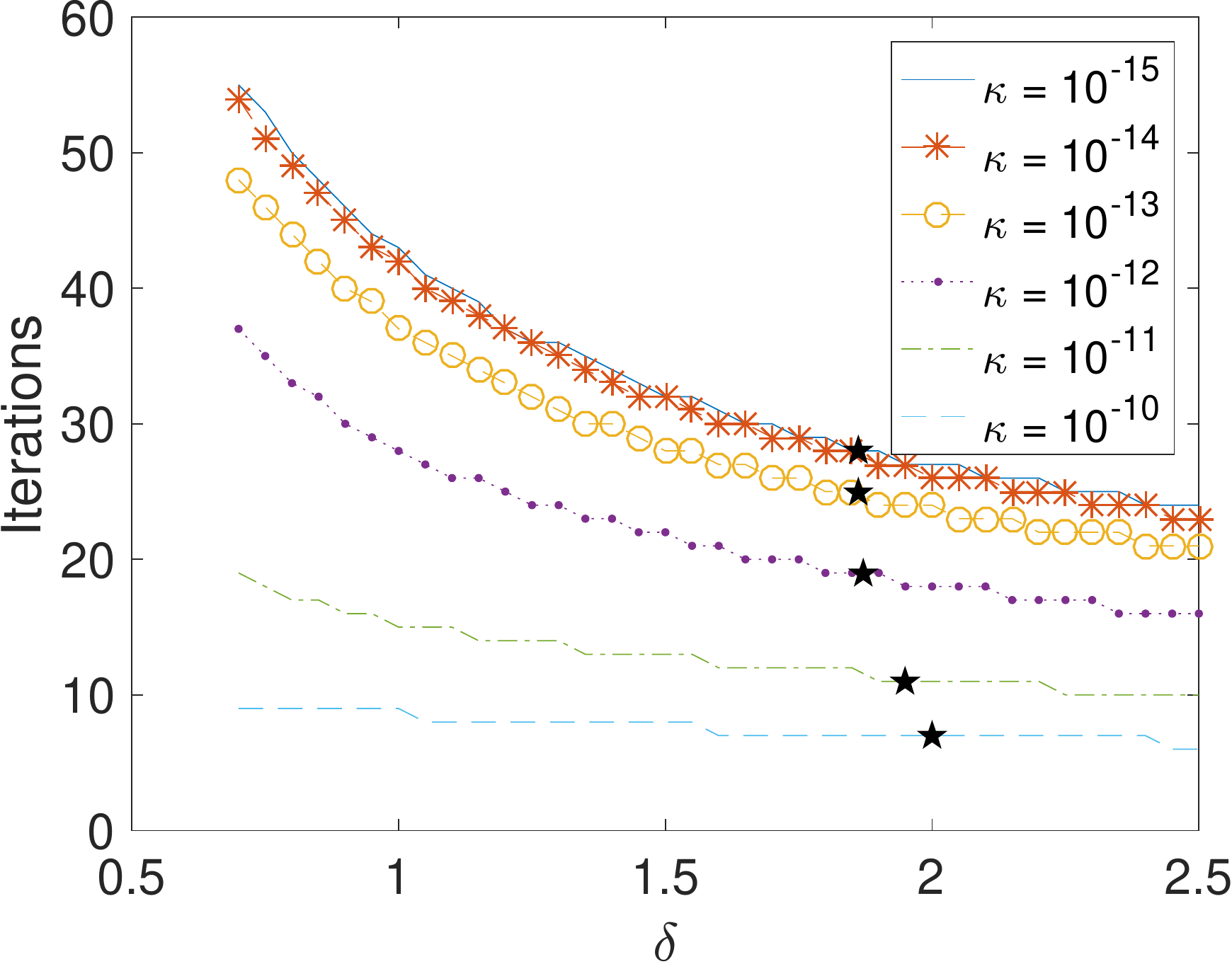}
\caption{P1-P1 discretization}
\label{fig:Test2P1}
\end{subfigure}
\label{fig:Test2}
\caption{Test case 2 (L-shaped domain): Total iteration count for one time step applying stabilization parameter $L=\frac{\alpha^2}{\delta K_{dr}}$ with $K_{dr}=1.4\mu+\lambda$. The star represents the theoretically calculated optimal $\delta$. }
\end{figure}
Again, for the stable discretization, Figure \ref{fig:Test2P2}, we observe that as the permeability increases so does the optimal choice of $\delta$. In the not inf-sup stable discretization, Figure \ref{fig:Test2P1}, however, we experience that the optimal choice lies outside the theoretical interval of $(1,2]$. It is interesting to note that the numerical results indicate that in general, the convergence for the stable P2-P1 discretization is better than the conditionally stable P1-P1, even when the latter is within a parameter regime where it is stable. 

\subsection{Mandel's Problem}
In this section we consider Mandel's problem, a relevant 2D problem with known analytical solution that is derived in \cite{Abousleiman,coussy}. The problem is often used as a benchmark problem for discretizations. The analytical expressions for pressure and displacement are given by
\begin{eqnarray}
p= &\frac{2FB \left( 1 + \nu_u \right)}{3a} \sum_{n=1}^\infty{ \frac{sin\left(\alpha_n\right)}{\alpha_n-sin\left(\alpha_n\right)cos\left(\alpha_n\right)}\left(cos\left(\frac{\alpha_n x}{a}\right)-cos\left(\alpha_n\right)\right)e^{-\frac{\alpha_n^2c_ft}{a^2}}},\label{eq:mandelanalytic1}
\\u_x=&\left[ \frac{F\nu}{2\mu a}-\frac{F\nu_u}{\mu a} \sum_{n=1}^\infty{ \frac{sin\left(\alpha_n\right)cos\left(\alpha_n\right)}{\alpha_n-sin\left(\alpha_n\right)cos\left(\alpha_n\right)}e^{-\frac{\alpha_n^2c_ft}{a^2}}}\right]x\nonumber
\\&+\frac{F}{\mu}\sum_{n=1}^\infty{\frac{cos\left(\alpha_n\right)}{\alpha_n-sin\left(\alpha_n\right)cos\left(\alpha_n\right)}sin\left(\frac{\alpha_nx}{a}\right)e^{-\frac{\alpha_n^2c_ft}{a^2}}},
\\
u_y=&\left[ \frac{-F\left(1-\nu\right)}{2\mu a}+\frac{F\left(1-\nu_u\right)}{\mu a} \sum_{n=1}^\infty{ \frac{sin\left(\alpha_n\right)cos\left(\alpha_n\right)}{\alpha_n-sin\left(\alpha_n\right)cos\left(\alpha_n\right)}e^{-\frac{\alpha_n^2c_ft}{a^2}}}\right]y, \label{eq:mandelanalytic3}
\end{eqnarray}
where $\alpha_n$, $n\in\mathbb{N}$, correspond to the positive solutions of the equation 
$$ tan\left(\alpha_n\right)=\frac{1-\nu}{\nu_u-\nu}\alpha_n,$$ and $\nu_u$, $F$, $B$, $c_f$ and $a$ are input parameters, partially depending on the physical problem parameters. Here, we employ the values listed in Table~\ref{tab:Mandel}, also used in~\cite{phillips}. For a thorough explanation of the problem and the coefficients in \eqref{eq:mandelanalytic1}--\eqref{eq:mandelanalytic3} we refer to \cite{coussy,phillips}. 

We consider the domain, $\Omega = (0,100)\times(0,10)$, discretized by a regular triangular mesh with mesh sizes $\mathrm{dx}=5$ and $\mathrm{dy}=0.5$. An equidistant partition of the time interval is applied with time step size $\tau=10$ from $t_0=0$ to $T=50$. Initial conditions are inherited from the analytic solutions, \eqref{eq:mandelanalytic1}--\eqref{eq:mandelanalytic3}. As boundary conditions, we apply exact Dirichlet boundary conditions for the normal displacement on the top, left and bottom boundary. For the pressure, we apply homogeneous boundary conditions on the right boundary. On the remaining boundaries homogeneous, natural boundary conditions are applied. The tolerances, $\epsilon_{u,r}$ and $\epsilon_{p,r}$, are set to $10^{-6}$. Our approximated solution for $\kappa=10^{-10}$ is displayed in Figure~\ref{fig:SolTest3}.

\begin{table}[h]
\begin{center}
\begin{tabular}{| l | c | r |}
\hline
Symbol & Name & Value\\
\hline
$\lambda$ & Lam\'e parameter 1 & $1.650\cdot 10^9$  \\
$\mu$ & Lam\'e parameter 2 &$2.475\cdot 10^9 $ \\
$\nu$ & Poisson's ratio & $0.2$ \\
$B$ & Skempton coefficient & $0.833$\\
$\nu_u$ & Undrained Poisson's ratio & $0.44$\\
$F$ & Applied force & $6\cdot10^{8}$ \\
$\alpha$ & Biot-Willis constant & $1$ \\
M & Compressibility coefficient & $1.650\cdot10^{10}$ \\
$c_f$ & Fluid diffusivity constant & $0.47$\\
$\kappa$ & Permeability & $10^{-14}, 10^{-13}, ..., 10^{-10}$\\
$a$ & Width of domain & $100$\\
$b$ & Height of domain & $10$\\
dx & Horizontal mesh diameter & $5$\\
dy & Vertical mesh diameter & $0.5$\\
$\tau$ & Time step size &  $10$ \\
$t_0$ & Initial time & 0\\
$T$ & Final time & $50$\\
$\epsilon_{u,r}$ and $\epsilon_{p,r}$ & Tolerances & $10^{-12}$\\
\hline
\end{tabular}
\caption{Coefficients for test case 3 (Mandel's Problem)}
\label{tab:Mandel}
\end{center}
\end{table}

\begin{figure}[h!]
\begin{subfigure}[b]{0.488\textwidth}
\includegraphics[width=\textwidth]{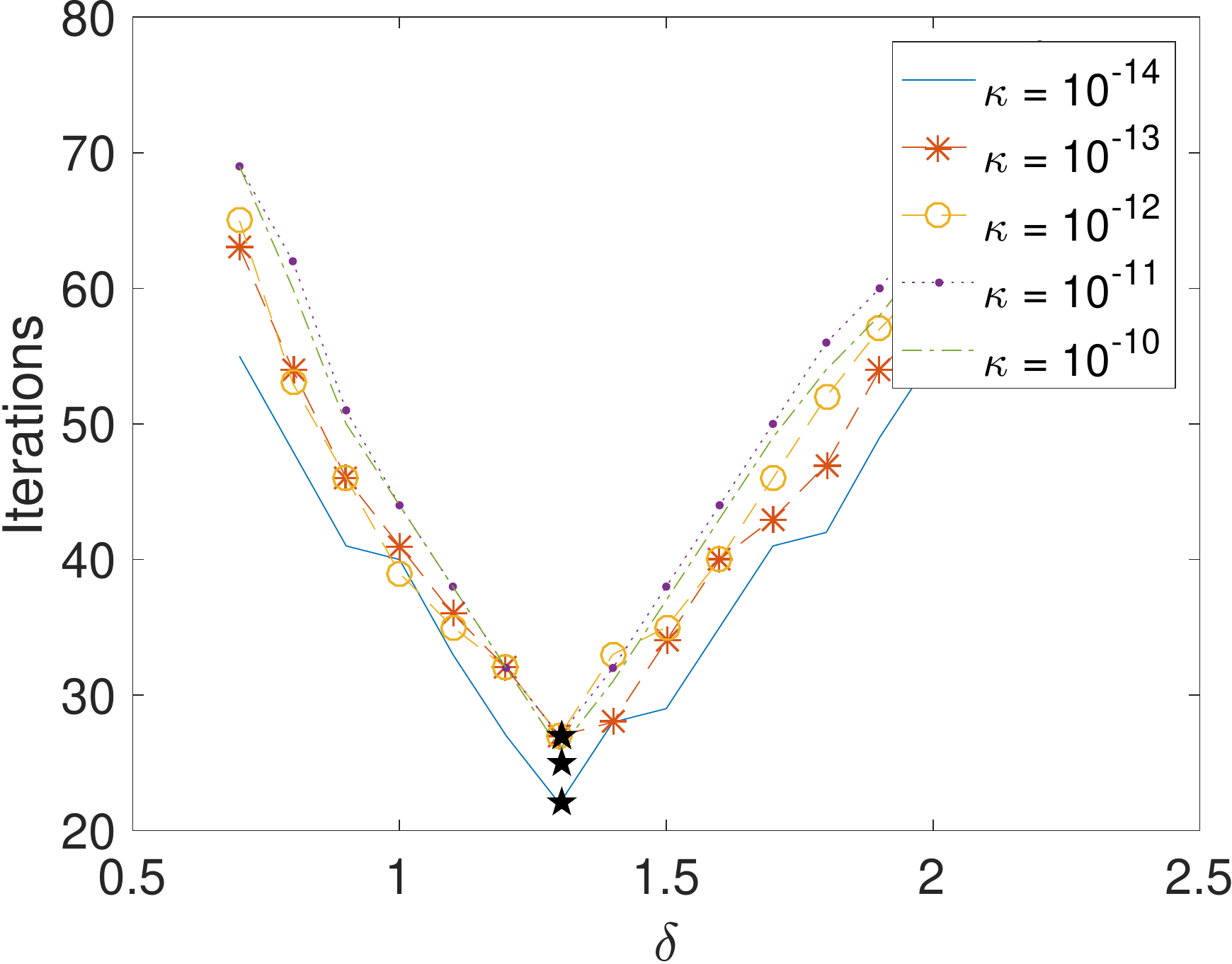}
\caption{P2-P1 discretization}
\label{fig:Test3P2}
\end{subfigure}
\begin{subfigure}[b]{0.5\textwidth}
\includegraphics[width=\textwidth]{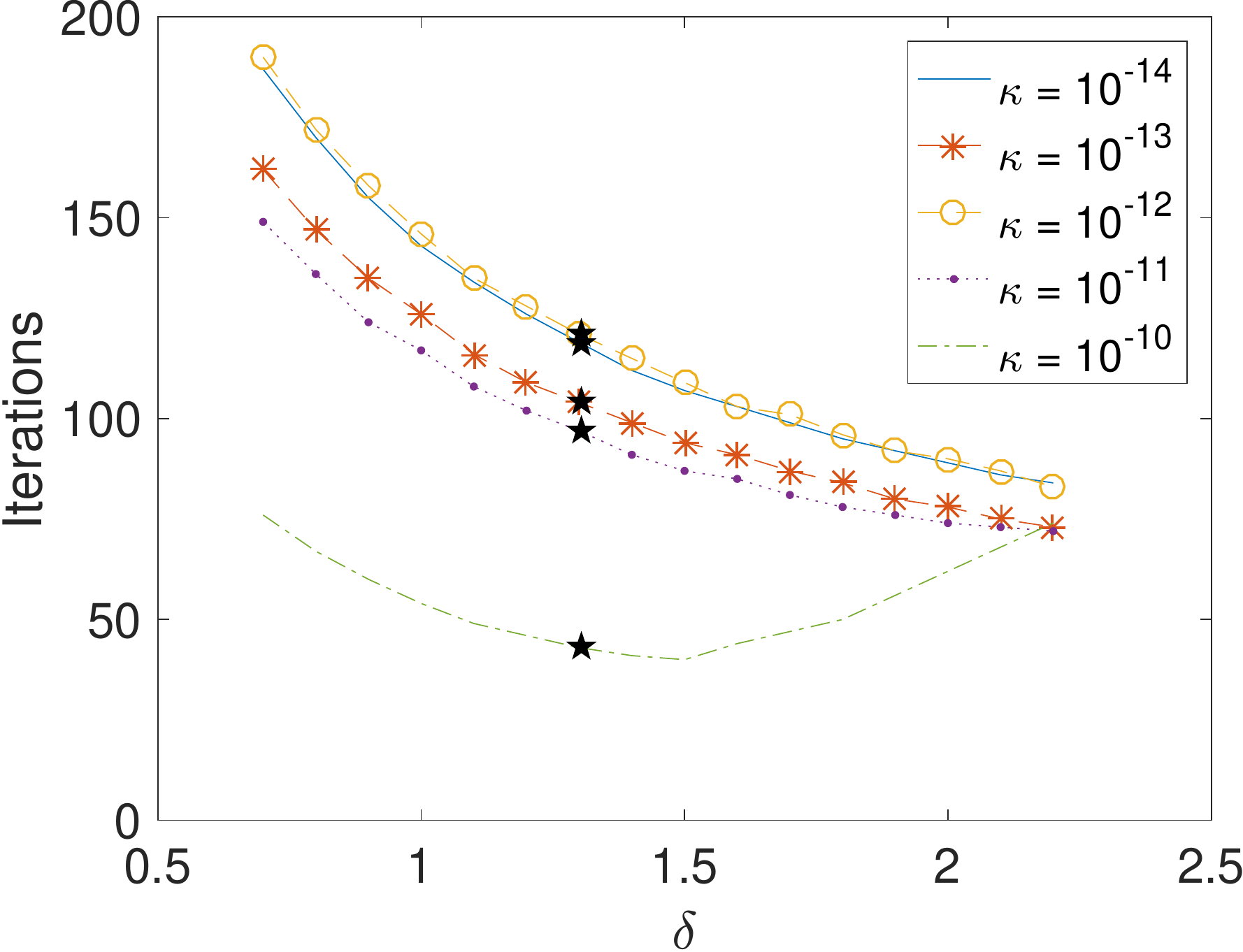}
\caption{P1-P1 discretization}
\label{fig:Test3P1}
\end{subfigure}
\label{fig:Test3}
\caption{Test case 3 (Mandel's problem), Total iteration count for five time steps applying stabilization parameter $L=\frac{\alpha^2}{\delta K_{dr}}$ with $K_{dr}=1.35\mu+\lambda$. The star represents the theoretically calculated optimal $\delta$.}
\end{figure}

Exactly as the theory predicts we observe that there is a fixed minimum for all the different permeabilities for the stable discretization, see Figure \ref{fig:Test3P2}. For the unstable discretization,  Figure \ref{fig:Test3P1}, however, we experience the same oscillatory behavior as before. There is also a clear difference in performance for the two discretizations. The inf-sup stable one performs much better, in terms of number of iterations. This is consistent with remark \ref{remark:convergence-inf-sup}.

\section{Conclusions}\label{sec:conclusions}

In this work we have considered the quasi-static, linear Biot model for poromechanics and studied theoretically and numerically the convergence of the fixed-stress splitting scheme. We have determined a formula for computing the optimal stabilization/tuning parameter, $L \in [L_{phys}/2, L_{phys}]$, depending also on the fluid flow properties and not only on the mechanics and the coupling term. We identified cases when the physical parameter $L_{phys}$ is the optimal one and cases when $L_{phys}/2$ should be taken. 

Furthermore,  we have shown for the first time that the performance of the fixed-stress scheme can be altered by a not inf-sup stable discretization. Illustrative numerical examples have been performed, including one with random initial data, random boundary conditions or random source terms, and a well-known benchmark problem, Mandel's problem. The numerical examples are in agreement with the theoretical results.


\bibliographystyle{ieeetr}

\end{document}